\documentclass[pdflatex,sn-mathphys-num]{sn-jnl}

\usepackage{graphicx}%
\usepackage{multirow}%
\usepackage{amsmath,amssymb,amsfonts}%
\usepackage{amsthm}%
\usepackage{mathrsfs}%
\usepackage[title]{appendix}%
\usepackage{xcolor}%
\usepackage{textcomp}%
\usepackage{manyfoot}%
\usepackage{booktabs}%
\usepackage{algorithm}%
\usepackage{algorithmicx}%
\usepackage{algpseudocode}%
\usepackage{listings}%

\theoremstyle{thmstyleone}%
\newtheorem{theorem}{Theorem}
\newtheorem{proposition}[theorem]{Proposition}%
\newtheorem{lemma}[theorem]{Lemma}

\theoremstyle{thmstyletwo}%
\newtheorem{remark}{Remark}%

\theoremstyle{thmstylethree}%

\usepackage{enumitem}

\renewcommand{\H}{\mathcal{H}}
\newcommand{\tto}{\rightrightarrows}

\makeatletter
\newcommand{\namedlabel}[2]{%
    \begingroup
    #2%
    \def\@currentlabel{#2}%
    \phantomsection
    \label{#1}%
    \endgroup
}
\makeatother

\usepackage{xcolor}
\usepackage[normalem]{ulem}
\usepackage{tikz}
\usetikzlibrary{patterns,decorations.pathmorphing,arrows.meta,calc}

\begin{document}

\title[Volterra Sweeping Process]{Volterra Sweeping Processes with Multivalued Perturbations under Compactness Conditions}


\author[1]{\fnm{Abderrahim} \sur{Jourani}\email{abderrahim.jourani@u-bourgogne.fr}}
\author[2,3]{\fnm{Diana} \sur{Narv\'aez}\email{diana.narvaez@postdoc.uoh.cl}}
\equalcont{These authors contributed equally to this work.}
\author*[2,3]{\fnm{Emilio} \sur{Vilches}\email{emilio.vilches@uoh.cl}}
\equalcont{These authors contributed equally to this work.}
\affil[1]{\orgdiv{Institut de Math\'ematiques de Bourgogne, UMR 5584, CNRS}, \orgname{Universit\'e Bourgogne Europe}, \state{Dijon}, \country{France}}
\affil*[2]{\orgdiv{Instituto de Ciencias de la Ingenier\'ia}, \orgname{Universidad de O'Higgins}, \state{Rancagua}, \country{Chile}}
\affil[3]{\orgdiv{Centro de Modelamiento Matem\'atico}, \orgname{Universidad de Chile}, \state{Santiago}, \country{Chile}}


\abstract{We study integro-differential sweeping processes of Volterra type in a separable Hilbert space, in which the velocity is governed by the normal cone to a prox-regular moving set and is driven by an outer set-valued perturbation together with a history-dependent integral term that endows the dynamics with memory. The perturbation is assumed measurable, with closed convex values, of linear growth, and upper semicontinuous from the strong to the weak topology. We study the existence of absolutely continuous solutions under either of two alternative compactness hypotheses: ball-compactness of the moving sets, or a measure-of-noncompactness condition on the perturbation. After a reduction of the constrained dynamics to an unconstrained differential inclusion and uniform a priori bounds on the state and its velocity, existence follows from a fixed-point theorem for set-valued maps with contractible values. The memory of the process makes this contractibility delicate, and we obtain it through a continuation argument that propagates the history of the dynamics. As applications, we solve a quasistatic frictionless viscoelastic contact problem with long memory and a spatially distributed bioeconomic fishery model with ecological memory, both featuring uncertain set-valued forcing and a moving constraint set that is not ball-compact.}

\keywords{sweeping process, Volterra integro-differential inclusion, prox-regular set, measure of noncompactness, set-valued perturbation, fixed point.}


\pacs[MSC Classification]{34A60, 49J53, 47H08, 45D05, 74M15.}

\maketitle

\section{Introduction}
Given a real Hilbert space $\mathcal{H}$, Moreau’s sweeping process is a differential inclusion involving normal cones to a family of moving closed sets. Since its introduction by J.-J. Moreau \cite{MO1,MO2}, it has become a central framework for the study of constrained dynamical systems arising in contact mechanics, electrical circuits, crowd motion, and related areas (see, e.g., \cite{Acary-Bon-Bro-2011,Maury-Venel}). Moreover, the theory of sweeping processes has been extended to broad classes of convex and nonconvex moving sets, including models involving multivalued perturbations and memory effects. In this paper, we consider the integro-differential sweeping process of Volterra type with an outer multivalued perturbation, namely the dynamical system 
\begin{equation}\label{Integro_Problem} 
\left\{ 
\begin{aligned} \dot{x}(t) & \in -N(C(t); x(t)) + f_1(t,x(t))+\int_{0}^t f_2(t,s,x(s))\, ds + F(t,x(t)) & \textrm{ a.e. } t\in I,\\ 
x(0)&=x_0 \in C(0), 
\end{aligned} 
\right. 
\end{equation}
where $I=[0,T]$, $C(t)\subset \mathcal{H}$ are nonempty closed sets and $N(C(t);x(t))$ denotes the Clarke normal cone to $C(t)$ at $x(t)\in C(t)$. The functions $f_1$ and $f_2$ are measurable in the time variable, locally Lipschitz with respect to the state variable, and satisfy suitable growth conditions. The perturbation term $F$ is a set-valued map from $I \times \mathcal{H}$ into convex  subsets of $\mathcal{H}$.

The relevance of memory-dependent dynamics of this type traces back to Brenier, Gangbo, Savar\'e and Westdickenberg \cite{Brenier2013}, who described sticky particle dynamics through a differential inclusion on the space of transport maps, thereby motivating the study of sweeping processes with memory in applications. From the analytical viewpoint, an initial step toward a rigorous theory was provided by Colombo and Kozaily \cite{Colombo_Kozaily2020}, who established existence and uniqueness results for a class of sweeping processes with Volterra-type perturbations by means of Moreau-Yosida regularization techniques. A further important contribution was made by Bouach, Haddad and Thibault \cite{BouachHaddadThibault2022, Haddad2022}, who developed a comprehensive existence theory for nonconvex integro-differential sweeping processes of Volterra type, establishing an analytical framework in which the interplay between nonconvex geometry and memory effects can be studied. 

More recently, Haddad, Gaouir and Thibault \cite{HaddadGaouirThibault2024, HaddadGaouirThibault2025} obtained existence results for integro-differential sweeping processes with prox-regular sets in Hilbert spaces, including the case of outer multivalued perturbations. We stress, however, that the perturbation in \cite{HaddadGaouirThibault2025} is assumed to be \emph{Lipschitz continuous} (with respect to the Hausdorff distance) with nonconvex values, a regularity requirement that is neither implied by nor implies the standing assumptions of the present paper, where $F$ is only upper semicontinuous (with graph closed in $\mathcal{H}\times\mathcal{H}_w$) and convex-valued, subject to a quantitative measure-of-noncompactness condition. Thus the two frameworks are complementary rather than nested. Recently, Vilches \cite{Vilches2024} proposed an alternative approach to well-posedness based on reparametrization techniques, enhanced versions of Gronwall’s inequality, and fixed-point arguments for history-dependent operators in the case $F\equiv 0$. This framework ensures existence and uniqueness of solutions, together with continuous dependence with respect to the data of the problem. Most closely related to the present work, P\'erez-Aros, Torres-Valdebenito and Vilches \cite{Pedro-Manuel-Emilio-2024} developed a Galerkin-like method for integro-differential inclusions with a possibly unbounded right-hand side and applied it to nonconvex state-dependent Volterra sweeping processes under compactness or monotonicity conditions; in particular, the ball-compact case of our main existence theorem is established there. In this work, we study an existence result for \eqref{Integro_Problem} under either a ball-compactness condition on the moving sets or a compactness condition on the multivalued perturbation. The ball-compact alternative is due to \cite{Pedro-Manuel-Emilio-2024}; the genuinely new contribution is the second alternative, in which $F$ is an upper semicontinuous convex-valued perturbation subject to the quantitative estimate $\gamma(F(t,A))\le k_r(t)\gamma(A)$ on bounded sets, $\gamma$ being a measure of noncompactness. This condition is strictly weaker than requiring $F$ to map bounded sets into relatively compact ones. To the best of our knowledge, no corresponding existence result is available in the literature even under this stronger compactness assumption.  As a first application, we prove the solvability of a quasistatic frictionless contact problem for a viscoelastic body with long memory subjected to uncertain, set-valued external loads. This problem is a genuine instance of the compactness condition on the perturbation, since the associated moving constraint set is not ball-compact; here the estimate holds in the degenerate form $k_r\equiv 0$. A second application, to a spatially distributed fishery model with ecological memory, illustrates the full strength of the condition, since there $k_r$ is a nontrivial positive function and neither the moving set nor the perturbation maps bounded sets into relatively compact ones.

Our approach is based on the analysis of an auxiliary problem obtained by fixing the outer multivalued perturbation. A key step in the proof is a continuous-dependence estimate with respect to the perturbation, combined with a compactness property of the associated solution operator. A priori estimates for both the state and its derivative, obtained via a reduction result, play a central role in the analysis. Existence is then obtained by applying a fixed-point theorem for upper semicontinuous set-valued maps with contractible values to the solution operator of the auxiliary problem, whose values are shown to be contractible through a continuation argument. In the general case, the argument is complemented by a truncation technique. \vspace{0.1cm}

The paper is organized as follows. In Section \ref{Math_Prel}, we introduce the necessary mathematical preliminaries from nonsmooth analysis. In Sections \ref{hipo-sol} and \ref{preliminary_L}, we present the technical assumptions imposed on the data and establish technical results, including fixed point results, estimates involving measures of noncompactness, and a reduction result for Volterra sweeping processes. Section \ref{main_result} establishes the main existence theorem for the integro-differential sweeping process with outer multivalued perturbation. The result is obtained under two alternative compactness assumptions: either the ball-compactness of the moving sets or a compactness condition on the perturbation.  Section~\ref{sec:viscoelastic-contact} illustrates the theory through an application to a quasistatic viscoelastic contact problem with memory, and Section~\ref{sec:fishery-application} through a spatially distributed fishery model with ecological memory. Finally, Section \ref{Concl} concludes the paper with a discussion of the main results and future research perspectives.

\section{Mathematical preliminaries}\label{Math_Prel}
From now on, $\H$ stands for a separable Hilbert space whose norm is denoted by $\Vert \cdot \Vert$. The closed unit ball is denoted by $\mathbb{B}$. The notation $\H_w$ stands for $\H$ equipped with the weak topology, and $x_n \rightharpoonup x$ denotes the weak convergence of a sequence $(x_n)_n$ to $x$.

We set $I=[0,T]$ and denote by $L^1\left(I;\H\right)$ the space of $\H$-valued Lebesgue integrable functions defined over $I$. Moreover, we say that $u\in \operatorname{AC}\left(I;\H\right)$ if there exists $f\in L^1\left(I;\H\right)$ and $u_0\in \H$ such that $u(t)=u_0+\int_{0}^t f(s)ds$ for all $t\in I$. 

Throughout the paper, $(I,\mathcal I)$ is endowed with a complete $\sigma$-finite positive measure, and $\mathcal B(\H)$ denotes the Borel $\sigma$-algebra of $\H$.

 \noindent Given $S\subset \H$ and $x\in S$, we say that  $h\in \H$ belongs to the \emph{Clarke tangent cone} $T(S;x)$ when for every sequence $(x_n)_n$ in $S$ converging to $x$ and every sequence of positive numbers $(t_n)_n$ converging to $0$, there exists  $(h_n)_n$ in $\H$ converging to $h$ such that $x_n+t_nh_n\in S$ for all $n\in \mathbb{N}$. This cone is closed and convex and its negative polar $N(S;x)$ is the \emph{Clarke normal cone} to $S$ at $x\in S$, i.e.,
\begin{equation*}
N\left(S;x\right):=\left\{v\in \H\colon \left\langle v,h\right\rangle \leq 0 \quad  \forall h\in T(S;x)\right\}.
\end{equation*}
As usual, $N(S;x)=\emptyset$ if $x\notin S$. Through that normal cone, the Clarke subdifferential of an extended-real-valued function $f\colon \H\to \mathbb{R}\cup\{+\infty\}$ is defined by
\begin{equation*}
\partial f(x):=\left\{v\in \H\colon (v,-1)\in N\left(\operatorname{epi}f,(x,f(x))\right)\right\},
\end{equation*}
where $\operatorname{epi}f:=\left\{(y,r)\in \H\times \mathbb{R}\colon f(y)\leq r\right\}$ is the epigraph of $f$. When the function $f$ is finite and locally Lipschitz around $x$, the Clarke subdifferential is characterized (see, e.g., \cite{Clarke1998}) in the following simple and amenable way
\begin{equation*}
\partial f(x)=\left\{v\in \H\colon \left\langle v,h\right\rangle \leq f^{\circ}(x;h) \textrm{ for all } h\in \H\right\}, 
\end{equation*}
where
\begin{equation*}
f^{\circ}(x;h):=\limsup_{(t,y)\to (0^+,x)}t^{-1}\left[f(y+th)-f(y)\right],
\end{equation*}
is the \emph{generalized directional derivative} of the locally Lipschitzian function $f$ at $x$ in the direction $h\in \H$.  The function $f^{\circ}(x;\cdot)$ is in fact the support of $\partial f(x)$. That characterization easily yields that the Clarke subdifferential of any locally Lipschitzian function has the important property of upper semicontinuity from $\H$ into $\H_w$.

 \noindent For $S\subset \H$ the distance function to the set $S$ at $x\in \H$ is defined by $d_{S}(x):=\inf_{y\in S}\Vert x-y\Vert$. We denote by $\operatorname{Proj}_{S}(x)$ the possibly empty set $\operatorname{Proj}_{S}(x):=\left\{y\in S\colon d_{S}(x)=\Vert x-y\Vert\right\}$. The equality  $N\left(S;x\right)=\overline{\mathbb{R}_+\partial d_S(x)}^{\ast}$  for  $x\in S$, gives an expression of the Clarke normal cone in terms of the distance function (see, e.g.,  \cite{Clarke1998}). As usual, it will be convenient to write $\partial d(x,S)$ in place of $\partial d\left(\cdot,S\right)(x)$.

 \noindent A vector $v\in \H$
is a proximal normal vector to the subset $S$ of $\H$ at $x\in S$ whenever there exists a constant $\sigma \geq 0$ such that
\begin{equation*}
  \langle v,x'-x \rangle \leq \sigma \|x'-x\|^2 \quad\text{for all }x'\in S.
\end{equation*}
The set of such vectors is the proximal normal cone $N^P(S,x)$ to $S$ at $x$. 
The proximal normal cone enjoys a geometrical characterization (see, e.g., \cite{Clarke1998}) given by the equality
\begin{equation}\label{eq2.5}
N^{P}(S,x)=\{v \in \H:\;\exists \lambda>0\text{ s.t. }x\in \operatorname{Proj}_{S}(x+\lambda v )\}.
\end{equation} 
As in the classical case, the proximal normal cone to the epigraph gives rise to an associated subdifferential. More precisely, for a lower semicontinuous function $f\colon \H \to \mathbb{R}\cup\{+\infty\}$ and $x\in \H$ with $f(x)<\infty$, the proximal subdifferential of $f$ at $x$ is (see, e.g., \cite{Clarke1998}) 
the set 
 $$\partial^{P}f(x) = \{ v\in \H: (v, -1)\in N^{P}(\operatorname{epi} f, (x, f(x)))\},$$
 where $\operatorname{epi} f:= \{(y, r)\in \H\times\mathbb{R}: f(y)\leq r\}$ is the epigraph of $f$.  The uniformity of the positive constant $\lambda$ in (\ref{eq2.5}) for the unit proximal normal
vectors to $S$ leads to the concept of uniformly prox-regular sets. For a given $\rho \in ]0,+\infty ]$
the closed subset $S$ is uniformly $\rho$-prox-regular (see, e.g.,  \cite{P_R_T-2000}) if every unit proximal normal vector to $S$
can be realized by a $\rho$-ball, which can be translated in the fact that for all $x\in S$ and all
$0\neq v \in N^{P}(S,x)$ one has
\begin{equation*}
  \left\langle v ,x'-x \right\rangle
  \leq \frac{\left\Vert v \right\Vert}{2\rho }\left\Vert x'-x\right\Vert ^{2},
\end{equation*}
for all $x'\in S$. We make the convention $\frac{1}{\rho }=0$ for
$\rho =+\infty .$ Recall that for $\rho =+\infty $ the uniform $\rho$-prox-regularity
of the closed set $S$ is equivalent to the convexity of the set.

 The following proposition provides some properties of the Proximal and Clarke subdifferentials
of the distance function $d(\cdot,S)$ when the set $S$ is prox-regular.
It also summarizes some important consequences of the
uniform prox-regularity which will be needed in the paper. For the proofs of these
results we refer to \cite{P_R_T-2000}.

\begin{proposition}
Let $S$ be a nonempty closed subset in $\H$ and let $\rho \in ]0,\infty ]$. Then, the following are
equivalent.
\begin{enumerate}[label=(\alph*)]
\item The set $S$ is uniformly $\rho$-prox-regular.
\item For any point $x'$ in $U_{\rho}(S):=\{x\in \H:\;d(x,S)<\rho\}$ the set
$\mathrm{Proj}_{S}(x')$ is a singleton and the mapping $\operatorname{Proj}_{S}(\cdot)$ is continuous on $U_{\rho}(S)$.
\item The proximal subdifferential of $d(\cdot,S)$ coincides with its Clarke subdifferential on $U_{\rho}(S)$;
\item The distance function $d(\cdot,S)$ is continuously Fr\'echet differentiable
on $U_{\rho}(S)$.
\item The set-valued mapping  $x\rightrightarrows N^P(S, x)\cap\mathbb{B}$ is $1/\rho$-hypomonotone, that is, for all $x_i\in S$ and all $v_i\in N^{P}(S,x_i)\cap \mathbb{B}$, $i=1,2$, one has
\begin{equation*}
\left\langle v_{1}-v_{2},x_{1}-x_{2}\right\rangle \geq -\frac{1}{\rho}\left\Vert
x_{1}-x_{2}\right\Vert ^{2}.
\end{equation*}
\end{enumerate}
\end{proposition}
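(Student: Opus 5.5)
The plan is to prove the equivalences through a chain centred on (a), with (b), (c), (d), (e) each tied to (a). The basic tool is the ball characterization: if $v\in N^P(S,x)$ with $\|v\|=1$, then (a) says exactly that the open ball $B(x+\rho v,\rho)$ misses $S$. Equivalently, $x\in\operatorname{Proj}_S(x+tv)$ for every $t\in\,]0,\rho[$. This is the proximal inequality rewritten after expanding $\|x'-(x+tv)\|^2\ge t^2$.

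First, for (a)$\Rightarrow$(e), take $x_i\in S$ and $v_i\in N^P(S,x_i)\cap\mathbb B$. Apply the prox-regular inequality at $x_1$ with $x'=x_2$, and at $x_2$ with $x'=x_1$, using $\|v_i\|\le1$, and add the two. This gives $\langle v_1-v_2,x_1-x_2\rangle\ge-\frac1\rho\|x_1-x_2\|^2$ directly. For the converse (e)$\Rightarrow$(a), I would use a density argument. Proximal normals are produced by projections at nearby points, and a perturbation argument of Borwein--Preiss or Ekeland type yields a dense set of points $x'$ near $S$ with $\operatorname{Proj}_S(x')\ne\emptyset$. Applying hypomonotonicity between $(x,v)$ and such realized pairs $(x',v')$ then gives the $\rho$-ball inequality in the limit. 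I expect this direction to be the main obstacle, since in infinite dimensions projections may be empty and one must work with approximate projections.

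Next, for (a)$\Rightarrow$(b), fix $u\in U_\rho(S)$. Take a minimizing sequence $x_n\in S$ with $\|u-x_n\|^2\le d_S(u)^2+\varepsilon_n$. Use the variational (Ekeland) principle to get points $\tilde x_n$ and approximate proximal normals $u-\tilde x_n+\eta_n$ with $\eta_n\to0$. Applying the hypomonotone inequality to pairs $n,m$, with normalization constants controlled because $d_S(u)<\rho$, gives the estimate
\[
\Bigl(1-\tfrac{d_S(u)}{\rho}\Bigr)\|\tilde x_n-\tilde x_m\|^2\le o(1).
\]
So $(\tilde x_n)_n$ is Cauchy, its limit is a projection, and the same estimate gives uniqueness. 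The same inequality also yields a local Lipschitz bound $\|\operatorname{Proj}_S(u)-\operatorname{Proj}_S(w)\|\le(1-r/\rho)^{-1}\|u-w\|$ on $\{d_S<r\}$, hence continuity.

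Then, for (b)$\Rightarrow$(d), use that $d_S^2$ is Fréchet differentiable with gradient $2(u-\operatorname{Proj}_S u)$ wherever the projection is unique and continuous; this is standard, via upper and lower difference estimates. So $d_S$ is $C^1$ on $U_\rho(S)\setminus S$. At points of $S$ itself, interpret (d) on the open tube minus $S$, as in \cite{P_R_T-2000}.

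For (d)$\Rightarrow$(c), note that a $C^1$ function has Clarke subdifferential $\{\nabla d_S\}$. The proximal subdifferential is nonempty there, because a proximal normal is obtained from the projection, so the two coincide.

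Finally, for (c)$\Rightarrow$(a), take a unit $v\in N^P(S,x)$. Use the equality of subdifferentials along the segment $x+tv$ to show that $d_S(x+tv)=t$ for all $t<\rho$. The idea is a continuation argument: the set of such $t$ is closed, and it is open because at $x+tv$ the proximal subgradient equals $v$. This gives the ball condition, closing the chain.
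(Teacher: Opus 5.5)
The paper gives no proof of this proposition; it quotes it from \cite{P_R_T-2000}. Judged on its own terms, your plan has a genuine gap at (d)$\Rightarrow$(c), so the cycle does not close. (You are right that (d) must be read on $U_\rho(S)\setminus S$.)

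A nearest point $p$ of $u\in U_\rho(S)\setminus S$ gives $u-p\in N^P(S,p)$. That is a proximal normal to $S$ at $p$, not a proximal subgradient of $d_S$ at $u$. For $\zeta:=(u-p)/d_S(u)$ to lie in $\partial^P d_S(u)$ you need
\[
d_S(y)\ge d_S(u)+\langle\zeta,y-u\rangle-\sigma\|y-u\|^2 \quad\text{for } y \text{ near } u .
\]
One checks that this holds if and only if $p$ is still a nearest point of $u+h\zeta$ for some $h>0$, i.e.\ iff the normal ray extends beyond $u$. This same equivalence is what makes your openness step in (c)$\Rightarrow$(a) work, so the step is circular.

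$C^1$-smoothness only gives an $o(\|y-u\|)$ remainder. The two facts you use at $u$ (singleton Clarke subdifferential, existence of a nearest point) do not suffice. Take $S:=\{(a,b)\in\mathbb{R}^2: a^2/4+b^2\ge 1\}$ and $u=(3/2,0)$, the centre of curvature at the vertex $(2,0)$. The nearest point is unique and $\partial d_S(u)=\{(-1,0)\}$. Yet $(2,0)$ is not a nearest point of $(3/2-h,0)$ for any $h>0$, hence $\partial^P d_S(u)=\emptyset$.

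Two further omissions in this step:
\begin{itemize}
\item (d) does not contain the existence of $p$. You need that Fr\'echet differentiability of $d_S$ at $u\notin S$ forces every minimizing sequence to converge to $u-d_S(u)\nabla d_S(u)$.
\item (c) must also hold at points $x\in S$, where $d_S$ is in general not differentiable. There you must show $\partial d_S(x)\subset N^P(S,x)\cap\mathbb{B}$.
\end{itemize}
So nowhere do you prove (b)$\Rightarrow$(a), or equivalently (d)$\Rightarrow$(a). This is the substantial part of the proposition. For $\rho=+\infty$ it is the classical theorem that a Chebyshev set of a Hilbert space with continuous metric projection is convex, which requires a genuinely global argument, not a pointwise one.

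Your (e)$\Rightarrow$(a) paragraph is only a gesture. One application of hypomonotonicity to $(x,v)$ and a pair at $y\in S$ with zero normal (e.g.\ an interior point) gives only $\langle v,y-x\rangle\le\frac1\rho\|y-x\|^2$. That is the constant of $\rho/2$-prox-regularity; the sharp $\frac{1}{2\rho}$ needs propagation along the ray $x+tv$. This part is repairable with your own tools:
\begin{itemize}
\item Your (a)$\Rightarrow$(b) argument uses only hypomonotonicity and approximate minimizers, so it proves (e)$\Rightarrow$(b), with $(1-r/\rho)^{-1}$-Lipschitz projections on $\{d_S<r\}$. Take the approximate minimizers from the density of points admitting a nearest point rather than from Ekeland's principle, so the normals are genuinely proximal.
\item For a unit $v\in N^P(S,x)$, let $\tau$ be the supremum of the $t<\rho$ with $d_S(x+tv)=t$, and suppose $\tau<\rho$. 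Put $\phi(s):=d_S(x+sv)$, $p_s:=\operatorname{Proj}_S(x+sv)$ and $w_s:=(x+sv-p_s)/\phi(s)$. Hypomonotonicity for $(x,v)$ and $(p_s,w_s)$ gives
\[
\bigl(1-\langle v,w_s\rangle\bigr)\Bigl(s+\phi(s)-\frac{2s\phi(s)}{\rho}\Bigr)\le\frac{1}{\rho}\bigl(s-\phi(s)\bigr)^2 .
\]
\item Since $\phi'(s)=\langle v,w_s\rangle$ by (b)$\Rightarrow$(d), the function $e(s):=s-\phi(s)$ satisfies $e(\tau)=0$ and $0\le e'\le Ke^2$ just to the right of $\tau$. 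Hence $e$ vanishes there, contradicting the choice of $\tau$.
\end{itemize}
You also need a direct proof of (a)$\Rightarrow$(c). Off $S$, the ray extension given by (a) yields the quadratic lower bound above. At $x\in S$, weak limits of normals obeying the uniform ball inequality remain proximal normals.

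With these repairs you get (a)$\Leftrightarrow$(c)$\Leftrightarrow$(e)$\Rightarrow$(b)$\Leftrightarrow$(d), and the only missing link is (b)$\Rightarrow$(a).
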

We now introduce the notion of a measure of noncompactness. We refer to \cite{MR3587794,Deimling1985} for more details.  Let $A$ be a bounded subset of $\H$. We define the \emph{Kuratowski measure of non-compactness of $A$}, $\alpha(A)$, as
\begin{equation*}
    \alpha(A)=\inf\{d>0\colon A \textrm{ admits a finite cover by sets of diameter }\leq d\},
\end{equation*}
and the \emph{Hausdorff measure of non-compactness of} $A$, $\beta(A)$, as
\begin{equation*}
    \beta(A)=\inf\{r>0\colon A \textrm{ can be covered by finitely many balls of radius } r\}.
\end{equation*}
For convenience we define $\alpha(A) = \beta(A) =+\infty$, whenever $A$ is unbounded. 
In Hilbert spaces, the relation between these concepts  is given by the inequality: 
\begin{equation*}
    \sqrt{2}\beta(A)\leq \alpha(A)\leq 2\beta(A) \textrm{ for } A\subset \H \textrm{ bounded.}
\end{equation*}
The following proposition summarizes the main properties of Kuratowski and Hausdorff measures of non-compactness.
\begin{proposition}
    Let $\H$ be an infinite dimensional Hilbert space and $B,B_1$ and $B_2$ be subsets of $\H$. Let $\gamma$ be either the Kuratowski or the Hausdorff measures of non-compactness. Then,
    \begin{enumerate}[label=(\roman{*})]
        \item $\gamma(B)=0$ if and only if $\overline{B}$ is compact;
        \item $\gamma(\lambda B)=|\lambda|\gamma(B)$ for every $\lambda\in \mathbb{R}$;
        \item  $\gamma(B_1+B_2)\leq \gamma(B_1)+\gamma(B_2)$;
        \item $B_1\subset B_2$ implies $\gamma(B_1)\leq \gamma(B_2)$;
        \item $\gamma(\operatorname{conv}B)=\gamma(B)$;
        \item $\gamma(\bar{B})=\gamma(B)$.
    \end{enumerate}
\end{proposition}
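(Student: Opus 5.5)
We verify the six properties one at a time, arguing only from the definitions of $\alpha$ and $\beta$ together with elementary Hilbert space geometry. The order is (iv), (ii), (vi), (iii), (i), and finally (v), which needs the most work. Throughout, $\gamma$ is either $\alpha$ or $\beta$. For unbounded sets both sides of each statement are $+\infty$, so the statements are trivial there, with the convention $0\cdot\infty=0$ in (ii) since $0\cdot B=\{0\}$ is compact. We may therefore assume all sets are bounded.

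For (iv), any finite cover of $B_2$, by sets of diameter $\le d$ or by balls of radius $r$, also covers $B_1$. For (ii), scaling by $\lambda\neq 0$ maps a set of diameter $d$ to one of diameter $|\lambda|d$, and maps a ball of radius $r$ to a ball of radius $|\lambda|r$; the inverse map $1/\lambda$ gives the reverse inequality. For (vi), $\operatorname{diam}\overline{A}=\operatorname{diam}A$, so closing the cover sets turns a cover of $B$ into a cover of $\overline B$. For $\beta$, if $B\subset\bigcup_i B(x_i,r)$, then $\overline B\subset\bigcup_i \overline{B}(x_i,r)\subset\bigcup_i B(x_i,r+\varepsilon)$ for every $\varepsilon>0$. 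Monotonicity then gives equality. For (iii), take covers $\{A_i\}$ of $B_1$ and $\{D_j\}$ of $B_2$ with near-optimal parameters. The sets $A_i+D_j$ cover $B_1+B_2$, and $\operatorname{diam}(A_i+D_j)\le\operatorname{diam}A_i+\operatorname{diam}D_j$. Likewise $B(x_i,r_1)+B(y_j,r_2)\subset B(x_i+y_j,r_1+r_2)$. Letting the slack tend to zero gives the inequality.

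For (i), if $\overline B$ is compact, it is totally bounded. Hence for every $\varepsilon>0$ it has a finite cover by balls of radius $\varepsilon$, which also have diameter $\le 2\varepsilon$, and so $\gamma(B)=0$. Conversely, if $\gamma(B)=0$, then $B$ is totally bounded: a set of diameter $\le\varepsilon$ that contains some point $b\in B$ lies in $B(b,\varepsilon+\varepsilon')$ for every $\varepsilon'>0$. Then $\overline B$ is totally bounded and complete, hence compact. The hypothesis that $\H$ is infinite dimensional plays no role here; it matters only for the normalization facts not claimed here.

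The main step is (v). Monotonicity gives one inequality, so it remains to show $\gamma(\operatorname{conv}B)\le\gamma(B)$. For $\beta$, fix $r>\beta(B)$ and a cover $B\subset\bigcup_{i=1}^n B(x_i,r)$, and set $K=\operatorname{conv}\{x_1,\dots,x_n\}$, which is compact. Then $B\subset K+r\mathbb B$, and $K+r\mathbb B$ is convex, so $\operatorname{conv}B\subset K+r\mathbb B$. Properties (iii) and (iv) give $\beta(\operatorname{conv}B)\le\beta(K)+r\beta(\mathbb B)$. This bound is too crude unless we only need $\beta(\mathbb B)\le1$, and indeed $\mathbb B=B(0,1)$, so $\beta(\mathbb B)\le 1$. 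Hence $\beta(\operatorname{conv}B)\le r$.

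For $\alpha$ the argument is the classical Darbo lemma. Fix $d>\alpha(B)$ and cover $B\subset A_1\cup\dots\cup A_n$ with $\operatorname{diam}A_i\le d$. Since the convex hull of a set has the same diameter as the set, we may replace each $A_i$ by its convex hull and assume the $A_i$ are convex. Then $\operatorname{conv}B$ is contained in the set
\[
\Big\{\textstyle\sum_i\lambda_i a_i:\ \lambda\in\Delta_n,\ a_i\in A_i\Big\}=\bigcup_{\lambda\in\Delta_n}\textstyle\sum_i\lambda_iA_i,
\]
where $\Delta_n$ is the standard simplex; this set is convex because the $A_i$ are convex. Given $\varepsilon>0$, choose a finite $\varepsilon$-net $\{\lambda^k\}$ of $\Delta_n$ in the $\ell^1$ norm, and let $M=\sup\{\|a\|:a\in\bigcup_i A_i\}$. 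Each $\sum_i\lambda_iA_i$ lies within $M\varepsilon$ of some $\sum_i\lambda^k_iA_i$, and each of the latter has diameter $\le d$. Therefore the sets $\sum_i\lambda^k_iA_i+M\varepsilon\mathbb B$ form a finite cover of $\operatorname{conv}B$ by sets of diameter $\le d+2M\varepsilon$. Letting $\varepsilon\to0$ and then $d\downarrow\alpha(B)$ gives $\alpha(\operatorname{conv}B)\le\alpha(B)$.
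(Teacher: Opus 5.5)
Your proof is correct. The paper gives no proof of this proposition and only refers to standard monographs on measures of noncompactness; your argument is the classical one found there: elementary covering arguments for (i)--(iv) and (vi), the inclusion $\operatorname{conv}B\subset\operatorname{conv}\{x_1,\dots,x_n\}+r\mathbb{B}$ for $\beta$ in (v), and Darbo's finite-net-of-the-simplex argument for $\alpha$ in (v). Your side remarks are also accurate: the convention $0\cdot\infty=0$ is needed for $\lambda=0$ in (ii), and infinite dimensionality plays no role.
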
 
We conclude this section with an estimate for the Hausdorff measure of noncompactness of the image of a bounded set under a Lipschitz mapping. 
\begin{lemma}
\label{lem:proj-mnc}
Let $A\subset\H$ be nonempty and let $f\colon A\to\H$ be $\kappa$-Lipschitz. Then, for every bounded set $B\subset A$,
\[
\beta\bigl(f(B)\bigr)\le \kappa\,\beta(B).
\]
In particular, if $S\subset\H$ is $\rho$-uniformly prox-regular with $\rho\in(0,+\infty)$ and $\lambda\in(0,1)$, then the metric projection $\operatorname{Proj}_S$ is single-valued and $(1-\lambda)^{-1}$-Lipschitz on the tube $U_{\lambda\rho}(S)=\{x\in\H\colon d(x,S)<\lambda\rho\}$, and hence
\[
\beta\bigl(\operatorname{Proj}_S(B)\bigr)\le \frac{1}{1-\lambda}\,\beta(B)
\quad\text{for every bounded set } B\subset U_{\lambda\rho}(S).
\]
\end{lemma}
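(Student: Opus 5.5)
The plan is in two parts: first the general Lipschitz estimate, obtained straight from the definition of $\beta$, and then its specialization to the metric projection, obtained by verifying the Lipschitz property on the tube.

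\textbf{Lipschitz estimate.} Fix a bounded $B\subset A$ and any $r>\beta(B)$. By the definition of $\beta$ there are finitely many points $y_1,\dots,y_m\in\H$ with $B\subset\bigcup_i (y_i+r\mathbb{B})$. The centers need not lie in $A$, where $f$ is defined, so I will first move them into $B$. Fix $\varepsilon>0$. Discard every ball that misses $B$. In each remaining ball pick $b_i\in B\cap(y_i+r\mathbb{B})$. Then $B\cap(y_i+r\mathbb{B})\subset b_i+2r\mathbb{B}$, which loses a factor $2$, so this crude choice is not good enough. Instead I use the equivalent description
\[
\beta(B)=\inf\{r>0\colon B \text{ is covered by finitely many balls of radius } r \text{ centered in } \H\},
\]
and handle the centers differently. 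For each $i$, set $B_i=B\cap(y_i+r\mathbb{B})$ and cover $f(B_i)$ directly. For $x,x'\in B_i$ we only have $\|f(x)-f(x')\|\le\kappa\operatorname{diam}B_i\le 2\kappa r$, which again gives the factor $2$.

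So the honest route with arbitrary centers goes through the Kuratowski-type bound $\alpha(f(B))\le\kappa\,\alpha(B)$, which is immediate because Lipschitz maps scale diameters by $\kappa$. That route gives only $\beta(f(B))\le\sqrt2\,\kappa\,\beta(B)$. To obtain the sharp constant $\kappa$ I will use Kirszbraun's theorem, valid in Hilbert spaces. It extends $f$ to a $\kappa$-Lipschitz map $\tilde f\colon\H\to\H$. Then $f(B)=\tilde f(B)\subset\bigcup_i \tilde f(y_i+r\mathbb{B})\subset\bigcup_i\bigl(\tilde f(y_i)+\kappa r\mathbb{B}\bigr)$. Hence $\beta(f(B))\le\kappa r$, and letting $r\downarrow\beta(B)$ gives the claim. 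This extension step is the main obstacle. Without it the centers cannot be moved into the domain without losing a constant, so I rely on Kirszbraun--Valentine for Hilbert spaces.

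\textbf{Projection.} By part (b) of the characterization of uniform $\rho$-prox-regularity, $\operatorname{Proj}_S$ is single-valued on $U_\rho(S)\supset U_{\lambda\rho}(S)$. For the Lipschitz bound, take $x_1,x_2\in U_{\lambda\rho}(S)$ with $p_i=\operatorname{Proj}_S(x_i)$. Then $v_i=x_i-p_i\in N^P(S,p_i)$ and $\|v_i\|<\lambda\rho$. Applying the hypomonotonicity in (e) to $v_i/(\lambda\rho)\in N^P(S,p_i)\cap\mathbb{B}$ gives
\[
\langle (x_1-p_1)-(x_2-p_2),p_1-p_2\rangle\ge-\lambda\|p_1-p_2\|^2 .
\]
Rearranging, $(1-\lambda)\|p_1-p_2\|^2\le\langle x_1-x_2,p_1-p_2\rangle\le\|x_1-x_2\|\,\|p_1-p_2\|$. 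This shows $\operatorname{Proj}_S$ is $(1-\lambda)^{-1}$-Lipschitz there. Applying the first part with $A=U_{\lambda\rho}(S)$ and $\kappa=(1-\lambda)^{-1}$ then yields the stated bound on $\beta(\operatorname{Proj}_S(B))$.
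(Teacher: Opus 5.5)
Your proposal is correct and follows the paper's proof: the sharp constant $\kappa$ comes from extending $f$ to all of $\H$ by Kirszbraun's theorem, so that the arbitrary centers of an almost-optimal cover lie in the domain, and the projection estimate is then the special case $A=U_{\lambda\rho}(S)$. Your hypomonotonicity computation $(1-\lambda)\|p_1-p_2\|^2\le\langle x_1-x_2,p_1-p_2\rangle$ is also correct, and it verifies the $(1-\lambda)^{-1}$-Lipschitz property of $\operatorname{Proj}_S$ on the tube, which the paper only asserts.
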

\begin{proof}
By Kirszbraun's extension theorem, valid between Hilbert spaces (see \cite{Kirszbraun1934}; see also \cite[Chapter~1]{BenyaminiLindenstrauss2000}), $f$ admits a $\kappa$-Lipschitz extension $\widetilde f\colon\H\to\H$ with $\widetilde f=f$ on $A$. Let $r>\beta(B)$ and let $x_1,\dots,x_N\in\H$ be such that $B\subset\bigcup_{i=1}^{N}B(x_i,r)$. Since $\widetilde f$ is defined at every center, $\widetilde f\bigl(B\cap B(x_i,r)\bigr)\subset B\bigl(\widetilde f(x_i),\kappa r\bigr)$ for each $i$, whence $f(B)=\widetilde f(B)\subset\bigcup_{i=1}^{N}B\bigl(\widetilde f(x_i),\kappa r\bigr)$ and therefore $\beta(f(B))\le \kappa r$. Letting $r\downarrow\beta(B)$ proves the first inequality. The second one follows by taking $f=\operatorname{Proj}_S$ on $A=U_{\lambda\rho}(S)$. We stress that the extension step cannot be dispensed with: the centers of an almost-optimal cover of $B$ need not belong to the domain of $f$, and without the extension the covering argument only yields a worse constant (cf. \cite[Lemma~2.7]{Pedro-Manuel-Emilio-2024}).
\end{proof}

\section{Technical assumptions}\label{hipo-sol}
\begin{enumerate}
    \item[\namedlabel{Hf}{$(\mathcal{H}^f_1)$}]  The function $f_1\colon I\times \H\to \H$ satisfies
\begin{enumerate}
        \item For each $x\in \H$, the map $t \mapsto f_1(t,x)$ is measurable.

        \item For all $r>0$, there exists an integrable function $\ell_{r}^{1}\colon I\to \mathbb{R}_+$ such that for all $t\in I$
\begin{align*}
        \Vert f_1(t,x)-f_1(t,y)\Vert \leq \ell_{r}^{1}(t)\Vert x-y\Vert \textrm{ for all } x,y\in r\mathbb{B}.
\end{align*}

        \item There exists a nonnegative integrable function $a_{1}$ such that
\begin{align*}
        \Vert f_1(t,x)\Vert \leq a_{1}(t) (1 + \Vert x\Vert) \textrm{ for all } t\in I \textrm{ and } x\in \H.
\end{align*}
\end{enumerate}
\end{enumerate}

\begin{enumerate}[leftmargin=2.75em]
    \item[\namedlabel{HF}{$(\mathcal{H}^F)$}]   The set-valued map $F\colon I\times \H\rightrightarrows \H$ has nonempty, closed and convex values.
    \end{enumerate}
\begin{enumerate}[leftmargin=4.75em]
        \item[\namedlabel{H1F}{$(\mathcal{H}_1^F)$}] The map $t\rightrightarrows \operatorname{gph}F(t,\cdot)$ is measurable, that is, for every  norm-open set $U\subset \H \times \H$, the set $\{ t\colon \operatorname{gph}F(t,\cdot)\cap U\neq \emptyset \}$ is Lebesgue measurable.
        
        \item[\namedlabel{H2F}{$(\mathcal{H}_2^F)$}] For a.e. $t\in I$, $\operatorname{gph}F(t,\cdot)$ is closed on $\mathcal{H}\times \mathcal{H}_w$.

        \item[\namedlabel{H3F}{$(\mathcal{H}_3^F)$}] There exists nonnegative integrable functions $c$ and $d$ such that
        \begin{equation*}
            \begin{aligned}
                \Vert F(t,x)\Vert:=\sup\{\Vert w\Vert \colon w\in F(t,x)\}\leq c(t) \Vert x\Vert + d(t),\,  x\in \H, \textrm{ a.e. } t\in I.
            \end{aligned}
        \end{equation*}
    \end{enumerate}

\begin{enumerate}[leftmargin=4.15em]
    \item[\namedlabel{H4F}{$(\mathcal{H}_{\mathcal{\textrm{Comp}}}^F)$}]
For all $r>0$, there exists an integrable function $k_r\colon [0,T]\to \mathbb{R}_+$ such that for a.e. $t\in [0,T]$ and $A\subset r\mathbb{B}$, one has
\begin{align*}
    \beta(F(t,A))\leq k_r(t)\beta(A),
\end{align*}
where $\beta$ denotes the Hausdorff measure of noncompactness (the same conclusions hold for the Kuratowski measure). 
\end{enumerate}

\begin{enumerate}[leftmargin=2.6em]
    \item[\namedlabel{Hg}{$(\mathcal{H}^f_2)$}]  The function $f_2\colon I\times I\times \H\to \H$ satisfies
     \end{enumerate}   
\begin{enumerate}
\item[(a)] For each $x\in \H$, the map $(t,s) \mapsto f_2(t,s,x)$ is measurable.
           \item[(b)] For all $r>0$, there exists a nonnegative function $\ell_{r}^{2}\in L^1(I)$  such that for all $(t,s)\in D:=\{(t,s)\in I\times I: s\leq t\}$
\begin{align*}
        \Vert f_2(t,s,x)-f_2(t,s,y)\Vert \leq \ell_{r}^{2}(t)\Vert x-y\Vert \textrm{ for all } x,y\in r\mathbb{B}.
\end{align*}
           \item[(c)] There exists a nonnegative function $a_{2}\in L^1(D)$ such that
      \begin{align*}
        \Vert f_2(t,s,x)\Vert \leq a_{2}(t,s)(1+\Vert x\Vert) \textrm{ for all } (t,s)\in D \textrm{ and } x\in \H.   
      \end{align*}
    \end{enumerate}

\begin{description}[
    style=unboxed,
    font=\normalfont,
    leftmargin=0pt,
    labelindent=0pt,
    labelsep=0.5em
]
    \item[\namedlabel{HC}{$(\mathcal{H}^C)$}]  The set-valued map $C\colon I \rightrightarrows \H$ 
 has nonempty, closed, $\rho$-uniformly prox-regular values and there exists $L_{C}\geq 0$ such that 
\begin{equation*}
\operatorname{Haus}(C(t),C(s)):=\sup_{z\in \H}|d(z,C(t))-d(z,C(s))|\leq L_{C} |t-s| \textrm{ for all } s,t\in I.
\end{equation*}
  \item[\namedlabel{HC3}{$(\mathcal{H}^C_{\textrm{Comp}})$}] The set-valued mapping $C$ satisfies the so-called ball-compact property, i.e., for all $t\in I$, for all $r>0$, the set $r\mathbb{B}\cap C(t)$ is compact. 
\end{description}

\section{Preliminary Lemmas}\label{preliminary_L}
Given a Hilbert space $\H$ and a set-valued map $G\colon [0,T]\times \H \tto \H$, for any measurable function $x\colon [0,T]\to \H$, we define the set of selections of $G$ as
\begin{equation*}
    \operatorname{Sel}_G(x)=\{ f: f \textrm{ is measurable  and } f(t)\in G(t,x(t)) \textrm{ a.e. } t\in [0,T]\}.
\end{equation*}
The following result is a fixed point theorem for set-valued maps with contractible values (see, e.g., \cite[Lemma~1]{MR1669396}).
\begin{lemma}\label{Fixed-point}
Let $X$ be a Banach space, let $\mathcal{K}\subset X$ be a nonempty compact convex set, and let $\mathcal{G}\colon \mathcal{K}\rightrightarrows \mathcal{K}$ be an upper semicontinuous set-valued map with nonempty, closed, and contractible values. Then $\mathcal{G}$ has a fixed point.
\end{lemma}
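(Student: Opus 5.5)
The natural route is the Eilenberg--Montgomery fixed point theorem, since contractible compact sets are acyclic. The paper cites this result (\cite[Lemma~1]{MR1669396}), so the proof will essentially be a reduction to that theorem. The steps are as follows.

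First, recall the setting. $\mathcal{K}$ is a compact convex subset of a Banach space. By Dugundji's extension theorem it is an absolute retract, in particular a compact ANR. Moreover, being convex, it is contractible. Its reduced Čech homology (over $\mathbb{Q}$, say) therefore vanishes, so $\mathcal{K}$ is acyclic and its Lefschetz number equals $1$.

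Second, check the hypotheses on $\mathcal{G}$. It is upper semicontinuous with nonempty closed values in the compact set $\mathcal{K}$, so its values are compact. Each value is contractible, hence acyclic for Čech homology: contractibility gives trivial singular homology, and for compact sets we pass to Čech homology. This passage is the delicate point, which I would sidestep by invoking the Vietoris-type version of the Eilenberg--Montgomery theorem formulated directly for compact acyclic-valued maps on compact AR's. The graph $\Gamma=\operatorname{gph}\mathcal{G}\subset\mathcal{K}\times\mathcal{K}$ is closed, hence compact. The projection $p\colon\Gamma\to\mathcal{K}$ onto the first factor is then a Vietoris map, with fibres $\{x\}\times\mathcal{G}(x)$ that are acyclic. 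By the Vietoris mapping theorem, $p_*$ is an isomorphism in Čech homology.

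Third, apply the Lefschetz argument to the second projection $q\colon\Gamma\to\mathcal{K}$. The Lefschetz number of the pair $(p,q)$ is computed via $q_*p_*^{-1}$ on the homology of $\mathcal{K}$, which is that of a point, so this number equals $1\neq0$. The Eilenberg--Montgomery theorem then yields $(x,y)\in\Gamma$ with $p(x,y)=q(x,y)$, that is, $x\in\mathcal{G}(x)$.

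The main obstacle is the identification of contractibility with acyclicity in the Čech theory needed by Vietoris's theorem. I would handle it by citing the standard statement: compact contractible sets in metric ANR settings are Čech-acyclic, since Čech and singular homology agree up to shape and contractible spaces have trivial shape. Alternatively, one can simply cite the theorem as stated in \cite{MR1669396}.
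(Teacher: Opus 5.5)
Your argument is correct and matches the paper's treatment: the paper gives no proof of its own and cites \cite[Lemma~1]{MR1669396}, which is the Eilenberg--Montgomery theorem applied to a compact convex (hence compact AR) domain with contractible (hence acyclic) values, exactly as in your reduction. The step you flag as delicate is routine: \v{C}ech homology satisfies the homotopy axiom on compact spaces, so a compact contractible set is \v{C}ech-acyclic directly, with no appeal to shape theory or ANR structure of the values.
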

The next lemma establishes a bound on the Hausdorff measure of non-compactness of a bounded set (see, e.g.,  \cite[p.125]{MR1669396}).
\begin{lemma}\label{Lemma-beta}
For any bounded set $B\subset \H$ and any $\varepsilon>0$ there exists a sequence $(x_k)_k\subset B$ such that
\begin{equation*}
\beta(B)\leq 2\beta(\{x_k\colon k\geq 1\})+\varepsilon.
\end{equation*}
\end{lemma}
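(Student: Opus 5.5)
The statement to prove is Lemma~\ref{Lemma-beta}: for a bounded $B\subset\H$ and $\varepsilon>0$ there is a sequence $(x_k)_k\subset B$ with $\beta(B)\le 2\beta(\{x_k\})+\varepsilon$. I would pass through the Kuratowski measure $\alpha$, using the Hilbert-space comparison $\sqrt2\beta\le\alpha\le2\beta$ from Section~\ref{Math_Prel}. The cheap route gives $\beta(B)\le\alpha(B)/\sqrt2$. So it suffices to find a sequence $D=\{x_k\}\subset B$ with $\alpha(B)\le\alpha(D)+\varepsilon'$ and then use $\alpha(D)\le 2\beta(D)$. That yields $\beta(B)\le(2\beta(D)+\varepsilon')/\sqrt2\le 2\beta(D)+\varepsilon$, even with room to spare.

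If $\H$ is finite dimensional, $\beta(B)=0$ and any sequence works. In general I would build $D$ directly from separability: $\H$ is separable, so $B$ is separable and has a countable dense subset $D=\{x_k\}\subset B$. Then $\overline D\supset B$, so by property (vi) and monotonicity (iv), $\beta(B)\le\beta(\overline D)=\beta(D)$. This already gives $\beta(B)\le\beta(D)\le 2\beta(D)+\varepsilon$.

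With that choice the lemma is immediate, and the factor $2$ and the $\varepsilon$ are not even needed. They matter only in the nonseparable setting of the cited reference, where one argues by a maximal $\delta$-separated countable selection. In that case the main obstacle would be showing that a maximal separated family is countable and controls the covering radius. Here separability of $\H$, a standing assumption, removes that difficulty.

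The only step to check carefully is that a subset of a separable metric space is separable. This is standard: take a countable base and pick one point of $B$ in each basic open set that meets $B$.
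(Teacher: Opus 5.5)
Your proof is correct, but it takes a different route from the result the paper cites. The paper gives no proof of Lemma~\ref{Lemma-beta}; it refers to Bothe, where the estimate is intended for arbitrary, possibly nonseparable, Banach spaces.

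The standard argument behind that estimate goes as follows. If $\beta(B)\le\varepsilon$ there is nothing to prove. Otherwise put $r:=\beta(B)-\varepsilon>0$ and choose inductively $x_{n+1}\in B\setminus\bigcup_{i\le n}B(x_i,r)$. This is possible because $B$ cannot be covered by finitely many balls of radius $r<\beta(B)$. The resulting sequence is $r$-separated, so every set of diameter $<r$ contains at most one $x_k$. Hence $\alpha(\{x_k\colon k\ge1\})\ge r$, and $\alpha\le 2\beta$ gives $\beta(B)\le 2\beta(\{x_k\colon k\ge1\})+\varepsilon$.

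So, contrary to your closing aside, the general case involves no maximal separated family and no countability issue. The factor $2$ is exactly the constant in $\alpha\le2\beta$, and $\varepsilon$ only accounts for taking $r<\beta(B)$.

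Your route instead exploits the standing separability of $\mathcal{H}$. A countable dense $D\subset B$ (with points repeated if $B$ is finite) gives $\beta(B)=\beta(D)$. This is sharper: constant $1$ and no $\varepsilon$. It also fully suffices for the paper, since the lemma is only applied in Claim~2 to subsets of $\mathcal{H}$, where the factor $2$ could even be dropped. The separated-sequence argument, by contrast, buys independence from separability.

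Your first paragraph's detour through $\alpha$ and $\sqrt2\,\beta\le\alpha$ is correct, but it becomes superfluous once you have the dense subset.
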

The following result provides a reduction principle for Volterra sweeping processes, based on the technique originally introduced by Jourani, Haddad, and Thibault in \cite{Haddad2009}. It associates the constrained dynamics with an unconstrained differential inclusion involving the subdifferential of the distance function, whose trajectories are also solutions to the sweeping process.
We first establish the equivalence between the systems
\begin{equation}\label{Sweeping-Dif}
\left\{
\begin{aligned}
\dot{x}(t)\in & -N(C(t);x(t))+f_1(t,x(t))+\int_0^t f_2(t,s,x(s))\, ds\\
&+F(t,x(t)) & \textrm{ a.e. } t\in I,\\
x(0)=&x_0,
\end{aligned}
\right.
\end{equation}
and \begin{equation}\label{Reduced}
\left\{
\begin{aligned}
\dot{x}(t)\in& -(L_{C} + \eta(t))\partial d_{C(t)}(x(t))+f_1(t,x(t))+\int_{0}^t f_2(t,s,x(s))\, ds \\
&+ F(t,x(t)) & \textrm{ a.e. }  t\in I,\\
x(0)=&x_0,
\end{aligned}
\right.
\end{equation}
where $\eta(t)$ is defined by relation (\ref{eqeta}) below. \\
It should be noted that, in the next result, the convexity of the set-valued map is not needed.
\begin{proposition}\label{Main_Result_Red} 
Assume, in addition to \ref{HC}, \ref{HF}, \ref{Hf} and \ref{Hg}, that $x_{0}\in C(0)$. Let $x\colon I \to \mathcal{H}$ be an absolutely continuous function. Then, $x(\cdot)$ is a solution of the differential inclusion given by \eqref{Sweeping-Dif} if and only if it is a solution of \eqref{Reduced}. Moreover, one has
\begin{equation*}
\begin{aligned}
  \Vert x(t)\Vert &\leq \theta(t) \quad \textrm{ for all } t\in I,\\
   \Vert \dot{x}(t)\Vert &\leq \dot{\theta}(t)=2 \left(\gamma(t) + c(t)\right) \theta(t) + \varepsilon(t)\quad \textrm{ a.e. } t\in I,
  \end{aligned}
\end{equation*}
where 
\begin{equation}\label{eqeta}
\eta(t) := 2\left(c(t) + \gamma(t)\right)\theta(t) + 2 \left(\gamma(t) + d(t)\right),
\end{equation}
and 
\begin{equation*}
\begin{aligned}
\gamma(t)&:=a_{1}(t)+\int_0^t a_{2}(t,s)\, ds,\\ 
\theta(t)&:=\Vert x_0\Vert \exp\left(2\int_0^t (c(s) + \gamma(s))\, ds\right)+\int_0^t \varepsilon(s)\exp\left(2\int_s^t (c(\tau) + \gamma(\tau))\, d\tau\right)\, ds,\\ 
\varepsilon(t) &:= L_{C} + 2 \left(\gamma(t) + d(t)\right).
\end{aligned}
\end{equation*}
\end{proposition}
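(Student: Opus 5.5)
The plan follows the reduction technique of Haddad–Jourani–Thibault. Both directions are proved separately, and the a priori bounds come first, since they determine $\eta$.

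\emph{Step 1 (bounds for solutions of \eqref{Sweeping-Dif}).} Let $x$ solve \eqref{Sweeping-Dif}. Write $\dot x(t)=-\xi(t)+g(t)$, where
\[
g(t):=f_1(t,x(t))+\int_0^t f_2(t,s,x(s))\,ds+w(t),\qquad w(t)\in F(t,x(t)),
\]
and $\xi(t)\in N(C(t);x(t))$. By \ref{Hf}(c), \ref{Hg}(c) and \ref{H3F},
\[
\|g(t)\|\le \gamma(t)(1+\sup_{s\le t}\|x(s)\|)+c(t)\|x(t)\|+d(t).
\]
The classical estimate for sweeping processes with $L_C$-Lipschitz prox-regular sets gives $\|\xi(t)\|\le L_C+\|g(t)\|$ a.e. This follows because $d_{C(t)}(x(t))=0$ along the trajectory. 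Differentiating $t\mapsto d_{C(t)}(x(t))$ in the direction of the tangent cone, using $|d_{C(t)}(z)-d_{C(s)}(z)|\le L_C|t-s|$, shows that $\dot x(t)+\lambda$ lies in the Clarke tangent cone with $\|\lambda\|\le L_C$. Pairing with $\xi(t)/\|\xi(t)\|$ then gives the bound. Hence
\[
\|\dot x(t)\|\le L_C+2\|g(t)\|\le \varepsilon(t)+2(\gamma(t)+c(t))\sup_{s\le t}\|x(s)\|.
\]
Set $m(t)=\sup_{s\le t}\|x(s)\|$, which is nondecreasing. Then $m(t)\le \|x_0\|+\int_0^t(\varepsilon+2(\gamma+c)m)\,ds$, and Gronwall's lemma gives $m\le\theta$, because $\theta$ solves $\dot\theta=2(\gamma+c)\theta+\varepsilon$ with $\theta(0)=\|x_0\|$. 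Substituting back gives $\|\dot x\|\le\dot\theta$. Consequently $\|\xi(t)\|\le L_C+\|g(t)\|\le L_C+\eta(t)$ after the crude doubling built into $\eta$.

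\emph{Step 2 (\eqref{Sweeping-Dif} $\Rightarrow$ \eqref{Reduced}).} For $x(t)\in C(t)$ with $C(t)$ $\rho$-prox-regular, $N(C(t);x(t))\cap\mathbb B=\partial d_{C(t)}(x(t))$. Since $\|\xi(t)\|\le L_C+\eta(t)$, we can write $\xi(t)\in (L_C+\eta(t))\partial d_{C(t)}(x(t))$, and $x$ solves \eqref{Reduced}.

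\emph{Step 3 (\eqref{Reduced} $\Rightarrow$ \eqref{Sweeping-Dif}); this is the main obstacle.} We must show that a solution of \eqref{Reduced} stays in $C(t)$. The plan has three parts.
\begin{enumerate}
\item Show $\|x(t)\|\le\theta(t)$. This cannot be quoted from Step 1, since viability is not yet known. Instead repeat Step 1's Gronwall argument using $\|\partial d\|\le1$. This gives $\|\dot x\|\le L_C+\eta+\|g\|$, and a bootstrap/continuity argument on the first time $\|x\|$ exceeds $\theta$ closes the estimate, so that $\|g(t)\|\le\eta(t)/2$ there.
\item Let $\varphi(t)=d_{C(t)}(x(t))$, which is absolutely continuous. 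Suppose $\varphi(t)>0$ on some interval; we may restrict to where $\varphi<\rho$, since $\varphi(0)=0$ and by continuity. On that interval $d_{C(t)}$ is $C^1$ near $x(t)$ with gradient $\zeta$ of unit norm, and $\partial d_{C(t)}(x(t))=\{\zeta\}$. Using the Hausdorff-Lipschitz bound in $t$, we get a.e.
\[
\dot\varphi(t)\le L_C+\langle \zeta,\dot x(t)\rangle = L_C-(L_C+\eta(t))+\langle\zeta,g(t)\rangle\le -\eta(t)+\|g(t)\|\le 0.
\]
\item Since $\varphi(0)=0$, it follows that $\varphi\equiv0$.
\end{enumerate}
Once $x(t)\in C(t)$, the inclusion $\partial d_{C(t)}(x(t))\subset N(C(t);x(t))$, a cone, yields \eqref{Sweeping-Dif}. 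The delicate points are the chain rule for $\varphi$ and the bootstrap coupling the norm bound with viability. The history term requires using the running supremum $m$ throughout, which is why $\gamma$ includes $\int_0^t a_2$.
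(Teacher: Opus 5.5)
Your Steps~1 and~2 are essentially sound. Bounding the running supremum $m(t)=\sup_{s\le t}\|x(s)\|$ by a scalar Gronwall argument is a tidy substitute for the Volterra comparison principle the paper uses. One sign must be fixed. To bound $\|\xi(t)\|$ you need $\langle\xi(t),\dot x(t)\rangle\ge -L_C\|\xi(t)\|$, and this comes from the backward difference: $x(t-h)\in C(t-h)$ lies within $L_Ch$ of $C(t)$, so it is $-\dot x(t)$ that is tangent up to an error of size $L_C$. Pairing $\xi(t)$ with $\dot x(t)+\lambda\in T(C(t);x(t))$, as you wrote, only yields $\langle\xi,\dot x\rangle\le L_C\|\xi\|$, i.e.\ $\|\xi\|^2\ge\langle\xi,g\rangle-L_C\|\xi\|$, which is no upper bound. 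The paper does this directly with the prox-regularity inequality at $y_h=\operatorname{Proj}_{C(t)}(x(t-h))$.

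The genuine gap is part~1 of Step~3. Before viability is known you only have $\|\dot x\|\le L_C+\eta+\|g\|$, and $L_C+\eta=\dot\theta$ exactly. So even while $\|x\|\le\theta$ holds (hence $\|g\|\le\tfrac12\eta$), you only get $\frac{d}{dt}\|x\|\le\dot\theta+\tfrac12\eta$. Nothing is contradicted at the first exit time, and the bootstrap does not close. Part~2 needs $\|g\|\le\eta$, which comes from the norm bound, while the sharp norm bound needs the viable estimate $\|\dot x\|\le L_C+2\|g\|$. As ordered, the argument is therefore circular; you flag the coupling as delicate but do not supply it.

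The paper's mechanism is a joint Gronwall argument. Set $w:=(\|x\|-\theta)_+$ and $\Xi:=\varphi+w$. On $\{\varphi>0\}$ one has $\dot\varphi\le-\tfrac12\eta+(a_1+c)w+\int_0^ta_2w$ and $\dot w\le\tfrac12\eta+(a_1+c)w+\int_0^ta_2w$, so the terms $\pm\tfrac12\eta$ cancel. On $\{\varphi=0\}$ the viable estimate gives $\dot w\le2(a_1+c)w+2\int_0^ta_2w$. Hence $\dot\Xi\le2(a_1+c)\Xi+2\int_0^ta_2\Xi$ with $\Xi(0)=0$, and $\varphi\equiv0$ and $\|x\|\le\theta$ follow simultaneously.

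Alternatively, you can reverse your order. Let $\tau$ be the last time up to which $x$ is viable; your Step~1 on $[0,\tau]$ gives $m(\tau)\le\theta(\tau)$. Using $\gamma+d\le\tfrac12\dot\theta$, one has $\|\dot x\|\le\tfrac32\dot\theta+(\gamma+c)m$ without any viability assumption. This yields $m\le2\theta$ on a short interval after $\tau$, which already gives $\|g\|\le\gamma+d+2(\gamma+c)\theta\le\eta$. Hence $\dot\varphi\le0$ there, contradicting the maximality of $\tau$. The sharp bound $\|x\|\le\theta$ then follows from Step~1.
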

\begin{proof}
Throughout, for an absolutely continuous curve $x\colon I\to\mathcal H$ we write
\[
r(t):=f_1(t,x(t))+\int_0^t f_2(t,s,x(s))\,ds+g(t),
\]
where $g(t)\in F(t,x(t))$ is the measurable selection associated with the inclusion under
consideration, so that \eqref{Sweeping-Dif} and \eqref{Reduced} read, respectively,
\[
\dot x(t)\in -N(C(t);x(t))+r(t),
\qquad
\dot x(t)\in -(L_C+\eta(t))\,\partial d_{C(t)}(x(t))+r(t),
\quad\text{a.e. }t\in I .
\]
Recall from $(\mathcal H^f_1)$, $(\mathcal H^f_2)$ and $(\mathcal H_3^F)$ that the functions
$a_1(t), c(t), d(t)$ and $\gamma(t)=a_1(t)+\int_0^{t}a_2(t,s)\,ds$ are nonnegative and integrable.

\medskip
\noindent\emph{Step 0: Elementary properties of the majorant.}
By construction $\theta\in\operatorname{AC}(I)$ solves the linear Cauchy problem
\begin{equation}\label{eq:theta-ode}
\dot\theta(t)=2\big(c(t)+\gamma(t)\big)\theta(t)+\varepsilon(t),\qquad \theta(0)=\|x_0\|,
\end{equation}
whose coefficients are nonnegative and integrable; hence $\theta$ is nondecreasing and
$\theta(t)\ge\|x_0\|\ge0$. Since $\varepsilon=L_C+2(\gamma+d)$, equation \eqref{eq:theta-ode}
gives the identity
\begin{equation}\label{eq:key-identity}
L_C+\eta(t)=\dot\theta(t)\qquad\text{for a.e. }t\in I ,
\end{equation}
because $L_C+\eta=L_C+2(\gamma+c)\theta+2(\gamma+d)=2(\gamma+c)\theta+\varepsilon=\dot\theta$.
Finally, we record the growth estimate: for any absolutely continuous map $x$ and a.e.\ $t$,
using $(\mathcal H^f_1)$, $(\mathcal H^f_2)$, $(\mathcal H_3^F)$ and $\|x(s)\|\le\theta(s)+w(s)$
with $w(s):=(\|x(s)\|-\theta(s))_+$ and $\theta$ nondecreasing, we obtain the following majorant of $r(t)$,
\begin{equation}\label{eq:r-bound}
\|r(t)\|\le \tfrac12\eta(t)+\big(a_1(t)+c(t)\big)w(t)+\int_0^t a_2(t,s)\,w(s)\,ds ,
\end{equation}
where we have used  
$$
\tfrac12\eta(t)=(\gamma(t)+c(t))\theta(t)+(\gamma(t)+d(t))
\geq a_1(t)(1+\theta(t))+\int_0^{t}a_2(t,s)(1+\theta(s))\,ds+c(t)\,\theta(t)+d(t).
$$
In particular, if $\|x(s)\|\le\theta(s)$ for all $s\in[0,t]$, then $w\equiv0$ on $[0,t]$ and
\begin{equation}\label{eq:r-half-nu}
\|r(t)\|\le \tfrac12\eta(t).
\end{equation}

\medskip
\noindent\emph{Step 1: A velocity estimate for viable curves.}
Let $x\in\operatorname{AC}(I;\mathcal H)$ satisfy $x(t)\in C(t)$ for all $t$ and
$\dot x(t)+\zeta(t)=r(t)$ with $\zeta(t)\in N(C(t);x(t))$ a.e. $t\in [0,T]$. We claim
\begin{equation}\label{eq:vel-bound}
\|\zeta(t)\|\le L_C+\|r(t)\|,\qquad \|\dot x(t)\|\le L_C+2\|r(t)\|\qquad\text{a.e. }t\in I .
\end{equation}
Fix a Lebesgue point $t$ with $\zeta(t)\neq0$. For $h>0$ small, $x(t-h)\in C(t-h)$ and, by assumption 
$(\mathcal H^C)$, $d(x(t-h),C(t))\le L_C h<\rho$. Let $y_h:=\operatorname{Proj}_{C(t)}(x(t-h))\in C(t)$,
so $\|y_h-x(t-h)\|=d(x(t-h),C(t))\le L_C h$. By $\rho$-prox-regularity, every
$\zeta\in N(C(t);x(t))$ satisfies $\langle\zeta,y-x(t)\rangle\le\frac{\|\zeta\|}{2\rho}\|y-x(t)\|^2$
for all $y\in C(t)$.  Applying to $y=y_h$ and split as
$y_h-x(t)=(y_h-x(t-h))+(x(t-h)-x(t))$, this yields
\[
\langle\zeta(t),x(t-h)-x(t)\rangle
\le \frac{\|\zeta(t)\|}{2\rho}\,\|y_h-x(t)\|^2+\|\zeta(t)\|\,L_C h .
\]
Since $\|y_h-x(t)\|\le L_C h+\int_{t-h}^t\|\dot x\|=O(h)$ and
$h^{-1}\big(x(t-h)-x(t)\big)\to-\dot x(t)$, dividing by $h$ and letting $h\to0^+$ gives
$\langle\zeta(t),\dot x(t)\rangle\ge -L_C\|\zeta(t)\|$. As $\dot x(t)=-\zeta(t)+r(t)$,
\[
-\|\zeta(t)\|^2+\langle\zeta(t),r(t)\rangle\ge-L_C\|\zeta(t)\|,
\]
which implies
$$
\|\zeta(t)\|\le \|r(t)\|+L_C \textrm{ and } \|\dot x(t)\|\le\|\zeta(t)\|+\|r(t)\|\le L_C+2\|r(t)\|.
$$
Finally, If $\zeta(t)=0$ the bounds are
trivial, hence, we have proved \eqref{eq:vel-bound}.

\medskip
\noindent\emph{Step 2: A priori bounds for viable solutions.}
Let $x$ be as in Step 1. Set $\psi(t):=\|x(t)\|$. Then $\dot\psi\le\|\dot x\|\le L_C+2\|r\|$
a.e., and, by the growth conditions,
\[
\dot\psi(t)\le \varepsilon(t)+2\big(a_1(t)+c(t)\big)\psi(t)+2\int_0^t a_2(t,s)\,\psi(s)\,ds
=:\Psi[\psi](t),\qquad \psi(0)=\|x_0\| .
\]
The operator $\Psi$ is order preserving. Since  $\theta$ is nondecreasing and by \eqref{eq:theta-ode} together with
$$\gamma(t)\theta(t)=a_1(t)\theta(t)+\theta(t)\int_0^t a_2(t,s)\,ds
\ge a_1(t)\theta(t)+\int_0^t a_2(t,s)\theta(s)\,ds, $$ one has
$\dot\theta\ge\Psi[\theta]$ with $\theta(0)=\psi(0)$. The comparison principle for Volterra
integro-differential inequalities (see the Gr\"onwall inequality
\cite[Lemma~2.1]{Pedro-Manuel-Emilio-2024}) then yields
$$
\|x(t)\|=\psi(t)\le\theta(t)\qquad\text{for all }t\in I .
$$
Consequently $w\equiv0$, inequalities \eqref{eq:r-half-nu} holds, and \eqref{eq:vel-bound} gives
\begin{equation*}
\begin{aligned}
\|\dot x(t)\| &\le L_C+2\|r(t)\|\\
&\le L_C+2\big[(\gamma(t)+c(t))\theta(t)+(\gamma(t)+d(t))\big]\\
&=2\big(\gamma(t)+c(t)\big)\theta(t)+\varepsilon(t)\quad\text{a.e. }t\in I .
\end{aligned}
\end{equation*}

\medskip
\noindent\emph{Step 3: \eqref{Sweeping-Dif}$\Rightarrow$\eqref{Reduced} and the a priori bounds.}
Let $x$ solve \eqref{Sweeping-Dif}. Since $N(C(t);x(t))=\emptyset$ whenever $x(t)\notin C(t)$,
the inclusion forces $x(t)\in C(t)$ for a.e.\ $t$; as $x$ is continuous and
$\operatorname{gph}C$ is closed (a consequence of $(\mathcal H^C)$), in fact $x(t)\in C(t)$ for
\emph{all} $t\in I$. Thus $x$ is viable and Steps~1-2 give the stated bounds on $\|x(t)\|$ and
$\|\dot x(t)\|$, together with $\|\zeta(t)\|\le L_C+\|r(t)\|\le L_C+\tfrac12\eta(t)\le L_C+\eta(t)$.
Writing $\zeta(t)=\|\zeta(t)\|\,\frac{\zeta(t)}{\|\zeta(t)\|}$ with
$\frac{\zeta(t)}{\|\zeta(t)\|}\in N(C(t);x(t))\cap\mathbb B=\partial d_{C(t)}(x(t))$ (valid for
$x(t)\in C(t)$ by prox-regularity), the bound $\|\zeta(t)\|\le L_C+\eta(t)$ shows
$\zeta(t)\in (L_C+\eta(t))\,\partial d_{C(t)}(x(t))$. Hence $x$ solves \eqref{Reduced}.

\medskip
\noindent\emph{Step 4: \eqref{Reduced}$\Rightarrow$\eqref{Sweeping-Dif}.}
Let $x$ solve \eqref{Reduced}. We first prove
\begin{equation}\label{eq:viability}
x(t)\in C(t)\quad\text{and}\quad \|x(t)\|\le\theta(t)\qquad\text{for all }t\in I .
\end{equation}
Set $\varphi(t):=d(x(t),C(t))$ and $w(t):=(\|x(t)\|-\theta(t))_+$; both are absolutely continuous,
nonnegative and vanish at $t=0$ (as $x_0\in C(0)$ and $\|x_0\|=\theta(0)$). Let
$\Xi:=\varphi+w$ and let $t_\rho:=\inf\{t\in I:\varphi(t)\ge\rho\}$; on $[0,t_\rho)$ we have
$\varphi<\rho$, so, by the prox-regularity property,  $d(\cdot,C(t))$ is continuously differentiable near $x(t)$ whenever
$\varphi(t)>0$, with unit gradient $u(t)=\nabla d_{C(t)}(x(t))$, the unique element of
$\partial d_{C(t)}(x(t))$.

We estimate $\dot\Xi$ a.e.\ on $[0,t_\rho)$, distinguishing the (a.e.\ disjoint) sets on which
$\varphi$ and $w$ vanish or not; on level sets $\{\varphi=0\}$ and $\{w=0\}$ the corresponding
derivative is a.e. zero.\\
\noindent \emph{(i) When $\varphi(t)>0$.} By $(\mathcal H^C)$, $\dot\varphi(t) \le L_C+\langle u(t),\dot x(t)\rangle$;
since $\dot x(t)=-(L_C+\eta(t))u(t)+r(t)$ and $\|u(t)\|=1$,
\begin{align*}
\dot\varphi(t) &\le L_C-(L_C+\eta(t))+\langle u(t),r(t)\rangle\\
& \le -\eta(t)+\|r(t)\|\\
&\le -\tfrac12\eta(t)+(a_1(t){+}c(t))w(t)+\int_0^t a_2(t,s)\,w(s)\,ds,
\end{align*}

where the last inequality is due to \eqref{eq:r-bound}.\\
\noindent \emph{(ii) When $w(t)>0$.} Then $\|x(t)\|>\theta(t)$ and $\dot w=\dot\psi-\dot\theta$. If moreover
$\varphi(t)=0$, Step~1 applies at $t$ and $\|\dot x(t)\|\le L_C+2\|r(t)\|$; if $\varphi(t)>0$, then
$\|\dot x(t)\|\le (L_C+\eta(t))+\|r(t)\|$. Using \eqref{eq:key-identity} in the second case and
\eqref{eq:r-bound} in both,
\[
\dot w(t)\le
\begin{cases}
2(a_1{+}c)w(t)+2\int_0^t a_2\,w\,ds, & \varphi(t)=0,\\
\tfrac12\eta(t)+(a_1{+}c)w(t)+\int_0^t a_2\,w\,ds, & \varphi(t)>0 .
\end{cases}
\]
Adding the relevant contributions on each set, the terms $\pm\tfrac12\eta(t)$ cancel on
$\{\varphi>0,\,w>0\}$, and we obtain, in all cases,
\[
\dot\Xi(t)\le 2\big(a_1(t)+c(t)\big)\,\Xi(t)+2\int_0^t a_2(t,s)\,\Xi(s)\,ds
\qquad\text{a.e. }t\in[0,t_\rho) .
\]
Since $\Xi\ge0$ and $\Xi(0)=0$, the Gr\"onwall inequality
\cite[Lemma~2.1]{Pedro-Manuel-Emilio-2024} (with zero forcing term) forces $\Xi\equiv0$ on
$[0,t_\rho)$; in particular $\varphi\equiv0$ there. By continuity, if $t_\rho\le T$ then
$\varphi(t_\rho)=0<\rho$, contradicting the definition of $t_\rho$. Hence $t_\rho>T$ and
\eqref{eq:viability} holds on all  $I$.

Finally, for $x(t)\in C(t)$ one has
$(L_C+\eta(t))\,\partial d_{C(t)}(x(t))\subset N^P(C(t),x(t))\subset N(C(t);x(t))$, so every
solution of \eqref{Reduced} solves \eqref{Sweeping-Dif}. By Step~3 (or directly by Step~2, which
applies since $x$ is now viable) the a priori bounds on $\|x(t)\|$ and $\|\dot x(t)\|$ hold. This
completes the proof.
\end{proof}
\begin{remark}\label{rem:any-m}
An inspection of the proof shows that the equivalence in Proposition~\ref{Main_Result_Red} remains valid when the coefficient $L_C+\eta(t)$ in \eqref{Reduced} is replaced by any integrable function $m$ with $m(t)\ge L_C+\eta(t)$ for a.e.\ $t\in I$. Indeed, in Step~4 the two occurrences of the coefficient cancel:
\[
\dot\varphi(t)+\dot w(t)\le\bigl[-(m(t)-L_C)+\Vert r(t)\Vert\bigr]+\bigl[(m(t)-L_C-\eta(t))+\Vert r(t)\Vert\bigr]=-\eta(t)+2\Vert r(t)\Vert,
\]
and the remainder of the argument is unchanged. Conversely, for $x\in C(t)$ the set $\partial d_{C(t)}(x)=N^P(C(t),x)\cap\mathbb B$ is star-shaped with respect to the origin, so that $(L_C+\eta(t))\,\partial d_{C(t)}(x)\subset m(t)\,\partial d_{C(t)}(x)$; hence every solution of \eqref{Sweeping-Dif} also solves the reduced problem with the coefficient $m$. The a priori bounds of Proposition~\ref{Main_Result_Red} hold for the solutions of this reduced problem as well.
\end{remark}

\section{Existence results}\label{main_result}

In this section, we will study existence results for the integrodifferential inclusion:
\begin{equation}\label{Problema}
    \left\{
    \begin{aligned}
        \dot{x}(t)  &   \in -N(C(t); x(t))+ f_1(t,x(t))+\int_{0}^t f_2(t,s,x(s))\, ds +F(t,x(t))
        & \textrm{ a.e. } t\in I;\\
        x(0)&=x_0 \in C(0).
    \end{aligned}
    \right.
\end{equation}
Depending on the assumptions of the data, we display two existence results for the above problem: one when the moving sets $C(t)$ are ball-compact, and another when the perturbation term $F$ satisfies an appropriate compactness condition. The first existence result was proved in  \cite{Pedro-Manuel-Emilio-2024}. The second one is new and generalizes several existence results present in the literature.

Recall that the functions $\gamma$, $\varepsilon$ and $\theta$ on $I$ into $\mathbb{R}_+$ were introduced in Proposition~\ref{Main_Result_Red}.

\begin{theorem}\label{existencia-sol} 
Assume that \ref{HC}, \ref{Hf}, \ref{Hg}, and \ref{HF} hold. Assume that one of the following assumption holds:
\begin{itemize}
\item[(a)] The ball-compactness of the moving sets \ref{HC3};
\item[(b)] Compactness condition on the perturbation \ref{H4F}.
\end{itemize}
Then, for any $x_{0}\in C(0)$, there exists a solution $x\in \operatorname{AC}(I;\H)$ of problem \eqref{Problema}. Moreover,
    \begin{equation*}
            \begin{aligned}
            \Vert x(t)\Vert &\leq \theta(t) & \textrm{ for all } t\in I,\\
                \Vert \dot{x}(t)\Vert &\leq 2 \left(\gamma(t) + c(t)\right)\theta(t)+\varepsilon(t) & \textrm{ for a.e. } t\in I.
        \end{aligned}
    \end{equation*}
\end{theorem}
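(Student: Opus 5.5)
By Proposition~\ref{Main_Result_Red} it suffices to produce a solution of the reduced problem \eqref{Reduced}. Every solution of \eqref{Reduced} is then a solution of \eqref{Problema} and inherits the bounds $\Vert x(t)\Vert\le\theta(t)$ and $\Vert\dot x(t)\Vert\le\dot\theta(t)$, so the estimates in the statement come for free. The plan is a fixed-point argument on the perturbation.

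\emph{Truncation.} I first replace $F$ by $\widetilde F(t,x):=F(t,\pi_t(x))$, where $\pi_t$ is the radial retraction onto $\theta(t)\mathbb B$. The map $\widetilde F$ keeps properties \ref{H1F}, \ref{H2F} and \ref{H4F}, since $\pi_t$ is $1$-Lipschitz. It is also bounded by $\mu(t):=c(t)\theta(t)+d(t)$. A posteriori the truncation is inactive, because the solutions satisfy $\Vert x\Vert\le\theta$.

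\emph{Auxiliary problem.} For each $g$ in
\[
\mathcal K_0:=\{g\in L^1(I;\H)\colon \Vert g(t)\Vert\le\mu(t)\ \text{a.e.}\},
\]
consider the problem with $F$ replaced by the single-valued term $g$. Its right-hand side $f_1+\int f_2+g$ is Lipschitz in the state. By \cite{Vilches2024} (or the reduction argument above), it has a unique solution $x_g$, satisfying the bounds of Proposition~\ref{Main_Result_Red}. A Gronwall-type estimate for the Volterra sweeping process, using hypomonotonicity of the normal cone, should give the continuous-dependence estimate
\[
\Vert x_g(t)-x_h(t)\Vert\le K\sup_{s\le t}\Bigl\Vert\int_0^s (g-h)\Bigr\Vert .
\]
Thus $g\mapsto x_g$ is continuous from $\mathcal K_0$ with the weak $L^1$ topology into $C(I;\H)$, once pointwise relative compactness of $\{x_g(t)\}$ is known.

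\emph{Fixed point.} I set $\mathcal G(g):=\operatorname{Sel}_{\widetilde F}(x_g)$ and look for a fixed point $g\in\mathcal G(g)$. Then $x_g$ solves the truncated problem, hence the original one. Upper semicontinuity of $\mathcal G$ follows from \ref{H2F}, convexity of the values and Mazur's lemma. Measurable selections exist by \ref{H1F}.

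Compactness is where (a) and (b) split.
\begin{itemize}
\item[(a)] Under \ref{HC3}, $x_g(t)\in C(t)\cap\theta(t)\mathbb B$, which is compact, so the solution set is relatively compact in $C(I;\H)$ by Arzel\`a--Ascoli (the velocities are equi-integrable).
\item[(b)] Under \ref{H4F}, I iterate $\mathcal K_{n+1}:=\overline{\operatorname{conv}}\,\mathcal G(\mathcal K_n)$ and estimate $\beta(\{x_g(t)\})$ with Lemma~\ref{Lemma-beta} (passing to countable families) and Lemma~\ref{lem:proj-mnc}. The solution is a limit of projections, and the normal-cone term contributes a factor controlled by prox-regularity. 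This should give
\[
\beta(\{x_g(t)\})\le 2K\int_0^t\bigl(\ell(s)+k_r(s)\bigr)\beta(\{x_g(s)\})\,ds,
\]
so Gronwall yields $\beta\equiv 0$. I would use a weighted exponential norm to absorb the factor $2$.
\end{itemize}
In either case I obtain a convex, weakly compact set $\mathcal K\subset L^1$ (Dunford--Pettis, using the bound by $\mu$) invariant under $\mathcal G$. On $\mathcal K$, $g\mapsto x_g$ has compact image, and I transfer the problem to a compact convex set in a Banach space as required by Lemma~\ref{Fixed-point}, working in the metrizable weak topology on bounded sets of a separable space.

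\emph{Contractibility (main obstacle).} The values $\operatorname{Sel}_{\widetilde F}(x_g)$ are convex and hence contractible, but the map $g\mapsto x_g$ is not affine. If instead I take the fixed point in the state variable, with $\mathcal S(x):=\{x_g\colon g\in\operatorname{Sel}_{\widetilde F}(x)\}$, the values are not obviously contractible. For this I would use the continuation argument in the history. For $\lambda\in[0,1]$ and $x_g\in\mathcal S(y)$, define $H(\lambda,x_g)$ as the solution that uses $g$ on $[0,\lambda T]$ and is continued on $[\lambda T,T]$ with a fixed selection chosen measurably from the history. The memory term $\int_0^t f_2$ forces this continuation to carry the whole past, which is why a plain time-shift homotopy does not work. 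Uniqueness for single-valued data together with continuous dependence gives continuity of $H$, and $H(0,\cdot)$ is constant. Verifying joint continuity of this homotopy is where I expect the real difficulty.
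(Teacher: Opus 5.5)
\paragraph{Verdict.} As written, the argument has a gap at the fixed-point step, and partly for a mistaken reason.

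\paragraph{The misdiagnosis.} You first set up $\mathcal G(g)=\operatorname{Sel}_{\widetilde F}(x_g)$ in the perturbation variable, then drop it because $g\mapsto x_g$ is not affine. That objection does not apply there. $\operatorname{Sel}_{\widetilde F}(x_g)$ is the selection set of a closed convex-valued map along one fixed trajectory, so it is nonempty, convex and weakly closed in $L^1(I;\H)$, however $x_g$ depends on $g$. Non-affinity only destroys convexity in the state variable.

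\paragraph{The unproved step.} You move to the state variable, which is the paper's formulation $\mathcal G_K=S_K\circ\operatorname{Sel}_{F_K}$, and there you leave the continuity of the homotopy unproved. The paper settles it directly. Fix one $\varphi\in\operatorname{Sel}(\ell)$, independent of $v$ and of any history. For $v=S(\omega)$ put $H(\tau,v)=S(\omega\chi_{[0,\tau]}+\varphi\chi_{(\tau,T]})$.
\begin{itemize}
\item \emph{It stays in the value:} $\operatorname{Sel}(\ell)$ is decomposable.
\item \emph{It depends only on $v$:} on $[\tau,T]$ it is the unique solution of the Volterra sweeping process with frozen history $v|_{[0,\tau]}$, initial value $v(\tau)$ and forcing $\varphi$.
\item \emph{Continuity:} after truncation the forcing is bounded, so all these curves share one integrable velocity majorant $q$. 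For $(\tau_n,v_n)\to(\tau,v)$ the difference is at most $\delta_n=\|v_n-v\|_\infty+2\int_{\min\{\tau_n,\tau\}}^{\max\{\tau_n,\tau\}}q$ up to time $\max\{\tau_n,\tau\}$. After that, both curves have the same forcing $\varphi$. Hypomonotonicity of $N^P(C(t),\cdot)\cap\mathbb B$, the Lipschitz bounds on $f_1,f_2$ and Gronwall then give the uniform bound $C_T\delta_n$.
\end{itemize}

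\paragraph{Alternative: keep the perturbation variable.} This bypasses contractibility altogether. On your convex, weakly compact, invariant set $\mathcal K\subset L^1(I;\H)$ you can apply Kakutani--Fan--Glicksberg once $g\mapsto x_g$ is weak-to-norm continuous on $\mathcal K$. Alternatively, use Lemma~\ref{Fixed-point} after an affine homeomorphism of $\mathcal K$ onto a compact convex subset of $\ell^2$. This works because the weak topology is metrizable on weakly compact subsets of the separable space $L^1(I;\H)$, though not on all bounded subsets as you wrote. This route trades the paper's continuation argument for weak compactness in $L^1$, but two steps must then be repaired.

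\begin{itemize}
\item \emph{Continuous dependence.} The linear bound $\|x_g-x_h\|\le K\sup_s\|\int_0^s(g-h)\|$ is not what hypomonotonicity gives. The normal-cone term is not Lipschitz, and integration by parts yields only a square-root bound. It is also not needed. Use instead the relative compactness of $\{x_g\colon g\in\mathcal K\}$ in $C(I;\H)$. If $g_n\rightharpoonup g$ and $x_{g_n}\to y$ uniformly, then $\int_0^t\langle g_n-g,x_{g_n}-x_g\rangle\,ds\to0$. The hypomonotonicity/Gronwall estimate then forces $y=x_g$.
\item \emph{Compactness in (b).} Your iteration $\mathcal K_{n+1}=\overline{\operatorname{conv}}\,\mathcal G(\mathcal K_n)$ takes closed convex hulls in $L^1$, so you must control pointwise values. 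Let $X_n(s)=\{x_g(s)\colon g\in\mathcal K_n\}$. You need that countably many $g_k\in\mathcal K_{n+1}$ satisfy $g_k(s)\in\overline{\operatorname{conv}}\,\widetilde F(s,X_n(s))$ for a.e.\ $s$. This follows by reducing to a countable generating family and passing to a.e.-convergent subsequences. Combined with Proposition~\ref{propest} and Lemma~\ref{Lemma-beta}, it gives a linear Volterra inequality for $\beta(X_n(t))$. Gronwall then yields $\beta\equiv0$ for any constant, so no weighted norm is needed to absorb the factor $2$.
\end{itemize}
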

\noindent The proof of this theorem uses the following result, in which the parameters considered are those of Theorem \ref{existencia-sol}.
\begin{proposition}\label{propest}
 Assume \ref{HC}, \ref{Hf}, \ref{Hg} and \ref{H3F}, and let $\rho\in(0,+\infty]$ be the prox-regularity constant in \ref{HC}. Consider integrable functions  $m$, $w_1$, and $w_2$ satisfying 
 $$
 m(t) \geq L_{C} + \eta(t)+d(t)+\bigl(1+\Vert\theta\Vert_\infty\bigr)\gamma(t) \textrm { and } w_1(t), w_2(t) \in d(t)\mathbb{B}, \textrm{ for a.e. } t\in I.
 $$
Set $R_*:=\|\theta\|_\infty$ and write $\ell_1:=\ell^1_{R_*}$ and $\ell_2:=\ell^2_{R_*}$ for the local Lipschitz moduli of $f_1$ and $f_2$ on $R_*\mathbb{B}$ provided by \ref{Hf}(b) and \ref{Hg}(b). Let $x_1(\cdot)$ and $x_2(\cdot)$ be two solutions of
\begin{equation*}
            \begin{aligned}
              \dot{x}(t) &\in  -N_{C(t)}(x(t))+ w_i(t) +f_1(t,x_i(t))+\int_0^t f_2(t,s,x_i(s))\, ds & \textrm{ for a.e. } t\in I,
        \end{aligned}
    \end{equation*}
with the same initial condition. Then for all $t\in [0,T]$, $x_1(t), x_2(t) \in C(t)$ and 
\begin{equation}\label{LipshS}
\Vert x_1(t)-x_2(t)\Vert \leq \int_0^t \exp\Big(\int_s^tv(\tau) d\tau\Big)\Vert w_1(s)-w_2(s)\Vert\, ds
\end{equation}
where
$$v(t) = \frac{m(t)}{\rho}+\ell_1(t)+ \int_0^t\ell_2(t)\, ds.$$
Moreover,  the mapping $S$  defined on $\mathcal{D}_d:=\{w\in L^1([0,T];\H)\colon w(t)\in d(t)\mathbb{B}\ \textrm{a.e.}\}$ with values in $W^{1,1}([0, T ]; \H)$ as $S(w) = x_w(\cdot)$, where $x_w(\cdot)$ is the unique solution of the problem
\begin{align*}
   \dot{x}(t)\in - m(t)\partial d_{C(t)}(x(t)) + w(t) + f_1(t,x(t))+\int_0^t f_2(t,s,x(s))\, ds,
\quad \text{for a.e. } t\in I,
\end{align*}
with $x(0)=x_0\in C(0)$ (for $w\in\mathcal{D}_d$, existence follows from \cite{BouachHaddadThibault2022} together with Remark~\ref{rem:any-m}, and uniqueness from \eqref{LipshS} applied with $w_1=w_2=w$),
satisfies the compactness property
$$
\beta\Big(\{Sw_k(t) : k\geq 1\}\Big) \leq \int_0^t  \exp\Big(\int_s^tv(\tau)\, d\tau\Big) \beta\Big(\{w_k(s) : k\geq 1\}\Big)\, ds\quad \textrm{ for all } t\in [0,T],
$$
where $(w_k)_{k\in \mathbb{N}}$ is  a given sequence in $\mathcal{D}_d$.
\end{proposition}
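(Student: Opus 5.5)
We prove the three assertions in turn: viability, the Lipschitz estimate \eqref{LipshS}, and the compactness estimate. Each solution $x_i$ solves a problem of the form \eqref{Sweeping-Dif} with the single-valued perturbation $F_i(t,x):=\{w_i(t)\}$. This perturbation satisfies \ref{H3F} with $c\equiv 0$ and the same $d$. Hence Proposition~\ref{Main_Result_Red} applies. It gives $x_i(t)\in C(t)$ for all $t$ and $\Vert x_i(t)\Vert\le\theta(t)\le R_*$. It also gives $\dot x_i(t)=-\zeta_i(t)+r_i(t)$ with $\zeta_i(t)\in N(C(t);x_i(t))$, where
\[
r_i(t)=w_i(t)+f_1(t,x_i(t))+\int_0^t f_2(t,s,x_i(s))\,ds .
\]
From Step~1 of that proof, $\Vert\zeta_i(t)\Vert\le L_C+\Vert r_i(t)\Vert$. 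The growth conditions give $\Vert r_i(t)\Vert\le d(t)+(1+\Vert\theta\Vert_\infty)\gamma(t)$. Therefore $\Vert\zeta_i(t)\Vert\le m(t)$, so $\zeta_i(t)\in m(t)\,\partial d_{C(t)}(x_i(t))$, which is also consistent with Remark~\ref{rem:any-m}.

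\textbf{Lipschitz estimate.} Differentiate $\tfrac12\Vert x_1-x_2\Vert^2$. The normal terms are controlled by the hypomonotonicity property (e) of the prox-regularity proposition, applied to $\zeta_i/m(t)\in N^P\cap\mathbb{B}$:
\[
-\langle \zeta_1-\zeta_2,x_1-x_2\rangle\le \frac{m(t)}{\rho}\Vert x_1-x_2\Vert^2 .
\]
The remaining terms are estimated using the Lipschitz moduli $\ell_1,\ell_2$ on $R_*\mathbb{B}$. This yields
\[
\frac{d}{dt}\tfrac12\Vert x_1-x_2\Vert^2\le\Big(\frac{m}{\rho}+\ell_1\Big)\Vert x_1-x_2\Vert^2+\Vert x_1-x_2\Vert\Big(\ell_2(t)\int_0^t\Vert x_1-x_2\Vert\,ds+\Vert w_1-w_2\Vert\Big).
\]
Set $u(t):=\sup_{s\le t}\Vert x_1(s)-x_2(s)\Vert$, which is nondecreasing. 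The memory term is then bounded by $t\,\ell_2(t)\,u(t)$, and this is what the factor $\int_0^t\ell_2(t)\,ds$ in $v$ encodes. We pass from the squared norm to the norm by the standard device: work with $\sqrt{\Vert x_1-x_2\Vert^2+\delta}$ and let $\delta\to0$. A linear Gronwall argument on the resulting inequality then gives \eqref{LipshS}. Applying \eqref{LipshS} with $w_1=w_2$ gives uniqueness, so $S$ is well defined.

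\textbf{Compactness estimate.} Fix $t$ and a sequence $(w_k)$ in $\mathcal{D}_d$. Let $\varepsilon>0$. The measurable map $s\mapsto\beta(\{w_k(s)\})$ lies below $d$. We approximate the $w_k$ in a "finite-dimensional plus small" way and transfer the covering through \eqref{LipshS}. The first idea is to use a Kuratowski/Mönch-type approach: with $P_n$ the orthogonal projection onto the first $n$ vectors of a basis, $\beta(B)=\lim_n\sup_{b\in B}\Vert(I-P_n)b\Vert$ for bounded $B$ in a separable Hilbert space. This formula is not directly integrable, however. The cleaner route is the known lemma (e.g.\ in \cite{MR1669396}) that $\beta(\{\int_0^t g_k\})\le\int_0^t\beta(\{g_k(s)\})\,ds$ for integrably bounded sequences. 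We would combine it with a linearization. Choose a finite family of functions $\bar w^1,\dots,\bar w^N$ such that each $w_k$ is within $\beta(\{w_k(s)\})+\varepsilon$ of some $\bar w^j(s)$ in a measurable way. This selection is obtained by Scorza-Dragoni/Lusin, taking piecewise constant approximations on a partition of $[0,t]$ into small intervals. Then \eqref{LipshS} gives
\[
\Vert Sw_k(t)-S\bar w^{j}(t)\Vert\le\int_0^t e^{\int_s^tv}\bigl(\beta(\{w_k(s)\})+\varepsilon\bigr)ds .
\]
Hence $\{Sw_k(t)\}$ is covered by $N$ balls of that radius. We expect this measurable finite-covering construction to be the main obstacle. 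One technical point is that the $\bar w^j$ need not lie in $\mathcal{D}_d$. The fix is to enlarge $d$ to $d+\varepsilon$ and $m$ accordingly, so that $S$ remains defined; the constants then pass to the limit as $\varepsilon\to0$.

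Finally, letting $\varepsilon\to0$ gives the stated bound on $\beta(\{Sw_k(t):k\ge1\})$.
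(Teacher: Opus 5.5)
Your viability step and your Lipschitz estimate are sound. Bounding $\Vert\zeta_i(t)\Vert\le L_C+\Vert r_i(t)\Vert\le L_C+d(t)+(1+\Vert\theta\Vert_\infty)\gamma(t)\le m(t)$ is a direct way to place the normal components in $m(t)\,\partial d_{C(t)}(x_i(t))$. Since $\int_0^t\ell_2(t)\,ds=t\,\ell_2(t)$, your running-supremum reduction to a classical Gronwall inequality reproduces exactly the function $v$ of the statement.

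The gap is in the compactness estimate, and it is not a technicality: the finite family $\bar w^1,\dots,\bar w^N$ you ask for does not exist in general. Take $d\equiv1$, a unit vector $e\in\H$, and $w_k(s):=r_k(s)e$, where $(r_k)_k$ are the Rademacher functions on $[0,T]$. Then $\{w_k(s):k\ge1\}\subset\{e,-e\}$, so $\beta(\{w_k(s):k\geq1\})=0$ for every $s$ and the right-hand side of the claimed inequality is $0$. But for $k\neq j$, $\Vert w_k(s)-w_j(s)\Vert=2$ on a set of measure $T/2$. By pigeonhole, no finite family with a fixed assignment $k\mapsto j(k)$ can approximate every $w_k$ within $\varepsilon<1$ a.e., nor even within less than $T/2$ in $L^1(0,T;\H)$. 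A common partition carrying piecewise-constant approximations cannot exist, because $\{w_k\}$ is not relatively compact in $L^1$. Scorza-Dragoni or Lusin does not help either, since the cover of $\{w_k(s)\}$ and the assignment of indices to its balls change with $s$.

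More fundamentally, \eqref{LipshS} controls $\Vert Sw(t)-S\bar w(t)\Vert$ only through the weighted $L^1$ distance $\int_0^t e^{\int_s^t v}\Vert w(s)-\bar w(s)\Vert\,ds$. Transferring coverings through it can therefore at best bound $\beta(\{Sw_k(t)\})$ by the measure of noncompactness of $\{w_k\}$ in a weighted $L^1$ space. In this example that quantity is positive for $t>0$, while the statement demands $0$.

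What is missing is a mechanism by which the time oscillations of the $w_k$ average out. You mention the inequality $\beta(\{\int_J w_k\})\le\int_J\beta(\{w_k(s)\})\,ds$, but your construction never uses it. Contrary to your remark, in a separable Hilbert space the formula $\beta(B)=\lim_n\sup_{b\in B}\Vert(I-P_n)b\Vert$ does pass through Bochner integrals by monotone convergence, and that is how this inequality is proved.

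The paper brings averaging in through the dynamics itself. It discretizes by the catching-up scheme on a mesh $h$, with the forcing entering only through its averages over the mesh intervals. At each step, subadditivity and homogeneity of $\beta$, the moduli $\ell_1,\ell_2$, and Lemma~\ref{lem:proj-mnc} for the projection onto $C(t_{i+1})$ (Lipschitz constant $1+\frac{m(t_i)}{\rho}h+o(h)$ on the relevant tube) give a discrete recursion for $\beta(\{x_k^i:k\ge1\})$. As $h\to0^+$ this becomes the Volterra integral inequality for $\Theta(t)=\beta(\{x_k(t):k\ge1\})$, and the same Gronwall lemma as in the Lipschitz part yields the bound. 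You need some argument of this kind, in which the nonlinear solution map sees $w_k$ through time averages rather than through an $L^1$ distance, in place of your covering transfer.
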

\begin{proof}   On the one hand, let us prove  \eqref{LipshS}. First, since $x_1$ and $x_2$ solve the sweeping process above and the Clarke normal cone is empty outside $C(t)$, the continuity of $x_i$ and the closedness of $\operatorname{gph}C$ (a consequence of \ref{HC}) give $x_i(t)\in C(t)$ for all $t\in I$. Moreover, Proposition~\ref{Main_Result_Red}, applied with the frozen perturbation $F(t,x):=\{w_i(t)\}$, for which \ref{H3F} holds with $c\equiv0$, yields $\Vert x_i(t)\Vert\le\theta(t)\le R_*$ (the a priori bound is monotone in $c$, so the bound corresponding to $c\equiv0$ is dominated by $\theta$). Hence the Lipschitz moduli $\ell_1,\ell_2$ on $R_*\mathbb{B}$ apply along the trajectories. Finally, by Remark~\ref{rem:any-m}, $x_1$ and $x_2$ also solve the reduced problem with coefficient $m$. Consequently, there are  measurable selections
$v_1$ and $v_2$ of the set-valued mapping $t\mapsto m(t)\partial d_{C(t)}(x_i(t))$, $i=1,2$, respectively, such that
$$
v_i(t) = w_i(t)+f_1(t,x_i(t))+\int_0^t f_2(t,s,x_i(s))\, ds-\dot{x}_i(t)\quad \textrm{ a.e. } t\in [0, T].
$$
\noindent Since $x_1(t),x_2(t)\in C(t)$, we can use the identity
\begin{align*}
\partial d_{C(t)}(x)=N^P(C(t),x)\cap \mathbb{B}
\qquad \text{for } x\in C(t),
\end{align*}
and thus $v_i(t)\in m(t)\left(N^P(C(t),x_i(t))\cap \mathbb{B}\right)$ for $i=1,2$.
From the $\rho$-prox-regularity of $C(t)$, the set-valued mapping
$x \rightrightarrows N^P(C(t),x)\cap \mathbb{B}$ is $1/\rho$-hypomonotone. Hence, applying the hypomonotonicity to the vectors $v_i(t)/m(t)\in N^P(C(t),x_i(t))\cap\mathbb{B}$ and multiplying by $m(t)$,
\begin{align*}
\left\langle v_1(t)-v_2(t), x_1(t)-x_2(t)\right\rangle
\geq -\frac{m(t)}{\rho} \left\|x_1(t)-x_2(t)\right\|^{2}
\end{align*}
or equivalently
\begin{align*}
&\left\langle \dot{x}_{1}(t)-\dot{x}_{2}(t), x_1(t)-x_2(t)\right\rangle
 \leq \frac{m(t)}{\rho} \left\|x_1(t)-x_2(t)\right\|^{2} + \langle w_1(t)-w_2(t), x_1(t)-x_2(t)\rangle + \\
&  \langle f_1(t,x_1(t))-f_1(t, x_2(t))+\int_0^t [f_2(t,s,x_1(s))- f_2(t,s, x_2(s))]\,ds,  x_1(t)-x_2(t)\rangle \\
& \leq \Vert x_1(t)-x_2(t)\Vert \Big( (\frac{m(t)}{\rho}+\ell_1(t)) \left\|x_1(t)-x_2(t)\right\| + \int_0^t \ell_2(t)\Vert x_1(s)-x_2(s)\Vert ds +\\
& \hskip 8cm\Vert w_1(t)-w_2(t)\Vert\Big). 
\end{align*}
Let $\Phi$ be the absolutely continuous function defined on $[0,T]$ by $\Phi(t) = \Vert x_1(t)-x_2(t)\Vert$. Then  
$$\dot{\Phi}(t) \leq  (\frac{m(t)}{\rho}+\ell_1(t)) \Phi(t) + \int_0^t \ell_2(t) \Phi(s)ds + \Vert w_1(t)-w_2(t)\Vert \quad \textrm{ for a.e. } t\in [0,T],$$
which asserts that for all $t\in [0,T]$, one has
\begin{equation*}
\Phi(t) \leq  \int_0^t(\frac{m(s)}{\rho}+\ell_1(s)) \Phi(s) ds + \int_0^t\!\! \int_0^s \ell_2(s)\, \Phi(\tau)\,d\tau\, ds 
+ \int_0^t\Vert w_1(s)-w_2(s)\Vert ds.
\end{equation*}
It remains now to apply the Gr\"onwall lemma (see \cite[Lemma~2.1]{Pedro-Manuel-Emilio-2024}) with 
$$K_1(s) = \Vert w_1(s)-w_2(s)\Vert,\, K_2(s)= \frac{m(s)}{\rho}+\ell_1(s)\, \textrm{ and  }\, K_3(s,\tau) = \ell_2(s);
$$
note that $\Phi$ is continuous and bounded with $\Phi(0)=0$, and that the function $\upsilon$ of that lemma is precisely $\upsilon(t)=K_2(t)+\int_0^t K_3(t,s)\,ds$. This proves \eqref{LipshS}.  On the other hand, we prove the compactness property. Let $(w_k)_k$ be a sequence in
$L^1([0,T];\H)$ and set $x_k:=Sw_k$, so that each $x_k$ solves
\[
\dot x_k(t)\in -m(t)\,\partial d_{C(t)}(x_k(t))+w_k(t)+f_1(t,x_k(t))
+\int_0^t f_2(t,s,x_k(s))\,ds,\qquad x_k(0)=x_0 .
\]
By Proposition~\ref{Main_Result_Red} and Remark~\ref{rem:any-m}, applied with the frozen forcings $w_k\in\mathcal{D}_d$, each $x_k$ is viable, with $\Vert x_k(t)\Vert\le\theta(t)\le R_*$ and velocity dominated by a fixed integrable function; in particular, the family $(x_k)_k$ is bounded in $\operatorname{AC}([0,T];\H)$. Moreover, the
pointwise estimate \eqref{LipshS} holds for every pair $(x_j,x_k)$ with the same exponential
factor $\exp(\int_s^t v)$. The argument now follows the pattern of Lemma~4 in Bothe
\cite{MR1669396}: one discretizes the sweeping process (equivalent, by Remark~\ref{rem:any-m}, to the reduced problem defining $S$) by the catching-up
algorithm, so that, on a partition $0=t_0<\dots<t_N=t$ of mesh $h$, the states are updated
through the metric projections
\[
x_k^{i+1}=\operatorname{Proj}_{C(t_{i+1})}\Big(x_k^{i}+h\,g_k^{i}\Big),
\qquad
g_k^{i}=w_k(t_i)+f_1(t_i,x_k^{i})+\sum_{j<i}h\,f_2(t_i,t_j,x_k^{j})
\]
(as usual for merely integrable data, the nodal values of $w_k$, $f_1$, $f_2$ and of the moduli below stand for the corresponding integral averages over $[t_i,t_{i+1}]$).
At each step, the subadditivity and homogeneity of $\beta$, together with
Lemma~\ref{lem:proj-mnc} (since $x_k^{i}\in C(t_i)$, one has $d\bigl(x_k^{i}+h\,g_k^{i},C(t_{i+1})\bigr)\le L_C h+h\Vert g_k^{i}\Vert$; the discrete a priori bounds and the lower bound imposed on $m$ give $L_C+\sup_k\Vert g_k^{i}\Vert\le m(t_i)+r_i$ with $\sum_i h\,r_i\to0$ as $h\to0^+$, so Lemma~\ref{lem:proj-mnc} applies on the tube $U_{\lambda_i\rho}(C(t_{i+1}))$ with $\lambda_i=(m(t_i)+r_i)h/\rho$ and Lipschitz constant $(1-\lambda_i)^{-1}=1+\tfrac{m(t_i)}{\rho}\,h+o(h)$; here and below, $o(h)$ stands for remainders $h\,\varrho_i^h$ with $\sum_i h\,\varrho_i^h\to0$) and the Lipschitz moduli $\ell_1,\ell_2$
of $f_1,f_2$ (applied through the first part of Lemma~\ref{lem:proj-mnc}, with $A=R_*\mathbb{B}$), yield
\[
\beta\big(\{x_k^{i+1}\}\big)
\le\Big(1+\tfrac{m(t_i)}{\rho}h+h\,\ell_1(t_i)+o(h)\Big)\beta\big(\{x_k^{i}\}\big)
+h\sum_{j<i}h\,\ell_2(t_i)\,\beta\big(\{x_k^{j}\}\big)
+h\,\beta\big(\{w_k(t_i)\}\big).
\]
Here the term $-m(t)\,\partial d_{C(t)}(x_k)$ is never isolated: its effect is entirely
absorbed into the projection, whose measure-of-noncompactness defect is controlled by
Lemma~\ref{lem:proj-mnc}. Letting $h\to0^+$, the convergence of the catching-up scheme for
prox-regular sweeping processes with a Volterra term (see
\cite{BouachHaddadThibault2022,HaddadGaouirThibault2025} and \cite{Pedro-Manuel-Emilio-2024})
turns the discrete recursion into the integral inequality
\begin{equation*}
\begin{aligned}
\beta\big(\{x_k(t):k\ge1\}\big)
\le &\int_0^t\Big(\tfrac{m(\sigma)}{\rho}+\ell_1(\sigma)\Big)\beta\big(\{x_k(\sigma)\}\big)\,d\sigma
+\int_0^t\!\!\int_0^\sigma\ell_2(\sigma)\,\beta\big(\{x_k(s)\}\big)\,ds\,d\sigma\\
&+\int_0^t\beta\big(\{w_k(\sigma)\}\big)\,d\sigma ,
\end{aligned}
\end{equation*}
which is the integral inequality obtained in the first part of the proof for
$\Phi(t)=\|x_1(t)-x_2(t)\|$, now with $\beta(\{x_k(\cdot)\})$ in place of $\Phi$ and
$\beta(\{w_k(\cdot)\})$ in place of $\|w_1(\cdot)-w_2(\cdot)\|$. The function $\Theta(t):=\beta(\{x_k(t):k\ge1\})$ is bounded and continuous, since $|\Theta(t)-\Theta(t')|\le\sup_k\Vert x_k(t)-x_k(t')\Vert$ and the velocities $\dot x_k$ are dominated by a common integrable function, and $\Theta(0)=0$ because all trajectories start at $x_0$. Moreover, $s\mapsto\beta(\{w_k(s)\})$ is measurable (see \cite{Pedro-Manuel-Emilio-2024}). Applying the same Gr\"onwall
inequality \cite[Lemma~2.1]{Pedro-Manuel-Emilio-2024} to $u=\Theta$ (so that $u(0)=0$), with
\[
K_1(s)=\beta\big(\{w_k(s)\}\big),\qquad K_2(s)=\tfrac{m(s)}{\rho}+\ell_1(s),\qquad
 K_3(\sigma,s)=\ell_2(\sigma),
\]
we conclude that, for every $t\in[0,T]$,
\[
\beta\big(\{Sw_k(t):k\ge1\}\big)
\le\int_0^t\exp\Big(\int_s^t v(\tau)\,d\tau\Big)\,\beta\big(\{w_k(s):k\ge1\}\big)\,ds ,
\]
with $v(t)=\tfrac{m(t)}{\rho}+\ell_1(t)+\int_0^t\ell_2(t)\,ds$. This establishes the
compactness property and completes the proof.
\end{proof}
We are now in a position to prove the main result of the paper.
\begin{proof}[Proof of Theorem \ref{existencia-sol}] 
$(a)$: The existence of solutions when the moving sets are ball-compact was proved in \cite[Theorem~5.2]{Pedro-Manuel-Emilio-2024}.\\
$(b)$: The proof is based on ideas developed in \cite{MR1669396}. The proof is divided in two steps:\\
\noindent {\it Step 1:  We assume that the linear growth condition holds with $c\equiv 0$.} Consider the map $S$ defined from $L^1([0,T];\H)$ to $ \operatorname{AC}([0,T];\H)$ as $S(\ell)=x_{\ell}(\cdot)$, where $x_{\ell}(\cdot)$ is the unique solution (see \cite{BouachHaddadThibault2022}; uniqueness also follows from \eqref{LipshS} and Remark~\ref{rem:any-m}) of the problem
\begin{equation*}
\left\{
\begin{aligned}
    \dot{x}(t) &\in -N_{C(t)}(x(t)) + f_1(t, x(t)) + \int_0^t f_2(t, s, x(s))\, ds + \ell(t) \quad \text{ a.e. } t\in [0,T],\\
    x(0)&=x_0\in C(0).
    \end{aligned}
    \right.
\end{equation*}
Put $\alpha(t):= (L_C+2\gamma(t)+2d(t))\exp\Big(2\int_0^t\gamma(s) ds\Big)$.
It is not difficult to see that each solution $R$ of the following differential equation
\begin{equation*}
\dot{y} = 
2\gamma(t)y+\alpha(t)\quad a.e.\ t\in I, \quad y(0)=\Vert x_0\Vert
\end{equation*}
satisfies  $R(t) \geq \theta(t)$, where $\theta $ is defined in Proposition \ref{Main_Result_Red}. From now on, we fix the integrable function
$$m(t):=L_C+\eta(t)+d(t)+\bigl(1+\Vert\theta\Vert_\infty\bigr)\gamma(t),$$
so that Proposition~\ref{propest} applies, and we let $v$ be the corresponding function defined there. Let $$K=\max\Big( \max_{t\in [0,T]}R(t), \exp(\int_0^T v(t)\,dt)\Big).$$ Define the map $S_K$ from $L^1([0,T];\H)$ to $ \operatorname{AC}([0,T];\H)$  by  $S_K(w) = S(\frac{w}{K})$ and the set valued mapping $F_K=KF$. 

To obtain a solution of \eqref{Problema} in this special case, we apply Lemma \ref{Fixed-point} to the set-valued map $\mathcal{G}_K:=S_K\circ \operatorname{Sel}_{F_K}$. 
 Let ${\cal R}_0\subset {\cal C}([0,T];\H)$ be the set defined by
\begin{align*}
{\cal R}_0=\{ f\in {\cal C}([0,T];\H)\colon \Vert f(t)\Vert \leq R(t)\quad \textrm{ for all }  t\in [0,T]\}.
\end{align*}
Let us show that $\mathcal{G}_K({\cal R}_0)\subset {\cal R}_0$. 
Indeed, let $\ell\in {\cal R}_{0}$ and take $w\in \operatorname{Sel}_{F_K}(\ell)$. We will show that $S_K(w) \in {\cal R}_{0}$. As $S_K(w)  = S(\frac{w}{K})$, then $x_{\frac{w}{K}}$ is a solution of the differential inclusion 
\begin{align*}
   \dot{x}(t)\in  -N_{C(t)}(x(t))+ {\frac{w(t)}{K}} +f_1(t,x(t))+\int_0^t f_2(t,s,x(s))\, ds,
\quad \text{for a.e. } t\in I,
\end{align*}
which in turn implies that $x_{\frac{w}{K}}$ is also a solution of the differential inclusion 
\begin{align*}
   \dot{x}(t)\in  -N_{C(t)}(x(t))+ F(t, \ell(t)) +f_1(t,x(t))+\int_0^t f_2(t,s,x(s))\, ds.
\quad \text{for a.e. } t\in I.
\end{align*}
Proposition \ref{Main_Result_Red}  ensure that $\Vert x_{\frac{w}{K}}(t)\Vert \leq  \theta(t)\leq R(t)$. This implies that  $S_K(w) \in {\cal R}_{0}$. \\
\noindent \emph{ Contractibility of the values of $\mathcal{G}_K$:} Fix $\ell\in\mathcal{R}_0$ and set $C_\ell:=\mathcal{G}_K(\ell)
=S_K\bigl(\operatorname{Sel}_{F_K}(\ell)\bigr)$. The measurable selection theorem and \ref{HF} imply that
$\operatorname{Sel}_{F_K}(\ell)\neq\varnothing$. Fix
$\varphi\in\operatorname{Sel}_{F_K}(\ell)$ and put $v_*:=S_K(\varphi)\in C_\ell$. For $\tau\in[0,T]$ and $v\in C_\ell$, consider the continuation
problem
\begin{equation}\label{eq:continuation-contractibility}
\left\{
\begin{aligned}
\dot z(t)\in{}&
-N_{C(t)}(z(t))+f_1(t,z(t))
+\int_0^\tau f_2(t,r,v(r))\,dr
+\int_\tau^t f_2(t,r,z(r))\,dr\\
&+\frac{\varphi(t)}{K} \qquad \qquad \qquad \qquad \qquad \qquad \qquad \qquad \qquad \qquad   \qquad 
 \text{for a.e. }t\in[\tau,T],\\
z(\tau)={}&v(\tau).
\end{aligned}
\right.
\end{equation}
This problem has a unique solution, denoted by $z_{\tau,v}$.
Indeed, choose
$\omega\in\operatorname{Sel}_{F_K}(\ell)$ such that
$v=S_K(\omega)$ and set $\widetilde\omega_\tau
:=\omega\chi_{[0,\tau]}
+\varphi\chi_{(\tau,T]}$.
Then $\widetilde\omega_\tau
\in\operatorname{Sel}_{F_K}(\ell)$, and the restriction of
$S_K(\widetilde\omega_\tau)$ to $[\tau,T]$ solves
\eqref{eq:continuation-contractibility}. Moreover, the continuation is independent of the particular
representation $v=S_K(\omega)$. In fact, if $v=S_K(\omega_1)=S_K(\omega_2)$, then the trajectories generated by
$$
\omega_i\chi_{[0,\tau]}
+\varphi\chi_{(\tau,T]},
\qquad i=1,2,
$$
coincide with $v$ on $[0,\tau]$. On $[\tau,T]$ they solve the same
Volterra sweeping process, with the same past history
$v|_{[0,\tau]}$, the same value $v(\tau)$ at time $\tau$, and the same
forcing $\varphi/K$. Uniqueness of the continuation problem therefore
shows that the two trajectories coincide on $[\tau,T]$.\\
\noindent Define $H\colon[0,T]\times C_\ell\to C_\ell$ by
$$
H(\tau,v)(t):=
\begin{cases}
v(t),&0\leq t\leq\tau,\\
z_{\tau,v}(t),&\tau<t\leq T.
\end{cases}
$$
For  $v=S_K(\omega)$, one has $H(\tau,v)
=
S_K\bigl(
\omega\chi_{[0,\tau]}
+\varphi\chi_{(\tau,T]}
\bigr)$. Hence $H(\tau,v)\in C_\ell$, so $H$ is well defined. Furthermore, $H(0,v)=v_*$ and $H(T,v)=v$ for every $v\in C_\ell$. We now prove that $H$ is continuous. Since $c\equiv0$, every
$\omega\in\operatorname{Sel}_{F_K}(\ell)$ satisfies
\[
\left\|\frac{\omega(t)}{K}\right\|\leq d(t)
\quad\text{for a.e. }t\in[0,T].
\]
Therefore, Proposition~\ref{Main_Result_Red} provides a function
$q\in L^1(0,T)$, independent of $\tau$ and $v$, such that
\[
\left\|
\frac{d}{dt}H(\tau,v)(t)
\right\|
\leq q(t)
\quad\text{for a.e. }t\in[0,T].
\]
For example, one may take $q(t):=
2\gamma(t)R(t)+L_C+2\gamma(t)+2d(t)$. Let $(\tau_n,v_n)\to (\tau,v)$ in $[0,T]\times C_\ell$,  and set $y_n:=H(\tau_n,v_n)$, $y:=H(\tau,v)$. Define
\[
a_n:=\min\{\tau_n,\tau\},
\,
b_n:=\max\{\tau_n,\tau\}, \,\textrm{ and } \, \delta_n:=
\|v_n-v\|_\infty
+2\int_{a_n}^{b_n}q(r)\,dr.
\]
Since $q\in L^1(0,T)$, one has $\delta_n\to0$. From the definition of
$H$ and the common modulus of absolute continuity, it follows that
\begin{equation}\label{eq:before-common-switch}
\|y_n(t)-y(t)\|
\leq\delta_n
\qquad\text{for every }t\in[0,b_n].
\end{equation}
Let $M:=\max_{t\in[0,T]}R(t)$, and let $\ell_M^1,\ell_M^2\in L^1(0,T)$ be the local Lipschitz
moduli of $f_1$ and $f_2$ on $M\mathbb B$. Set
\[
a(t):=\frac{m(t)}{\rho}+\ell_M^1(t),
\]
where $m\in L^1(0,T)$ is the common coefficient supplied by the
reduction argument. For a.e. $t\in[b_n,T]$, both $y_n$ and $y$ are driven by the same
forcing $\varphi/K$. The hypomonotonicity of $x\mapsto N^P(C(t),x)\cap\mathbb B$, 
together with the Lipschitz properties of $f_1$ and $f_2$, gives,  for $D_n(t):=\|y_n(t)-y(t)\|$, the estimate
\begin{equation*}
\dot D_n(t)
\leq
a(t)D_n(t)
+\ell_M^2(t)\int_0^tD_n(r)\,dr
\quad\text{for a.e. }t\in[b_n,T].
\end{equation*}
By \eqref{eq:before-common-switch},
$$
\int_0^tD_n(r)\,dr
\leq
T\delta_n+\int_{b_n}^tD_n(r)\,dr.
$$
Define $Y_n(t):=
D_n(t)+\int_{b_n}^tD_n(r)\,dr$. Then
$$
\dot Y_n(t)
\leq
\bigl(a(t)+1+\ell_M^2(t)\bigr)Y_n(t)
+T\ell_M^2(t)\delta_n \quad \textrm{ for a.e. } t\in [b_n,T],
$$
and $Y_n(b_n)\leq\delta_n$.
Gronwall's inequality yields $\sup_{t\in[b_n,T]}D_n(t)
\leq C_T\delta_n$, where
\[
C_T:=
\bigl(1+T\|\ell_M^2\|_{L^1}\bigr)
\exp\bigl(
\|a+1+\ell_M^2\|_{L^1}
\bigr)
\]
is independent of $n$. Combining with
\eqref{eq:before-common-switch}, we obtain $\|y_n-y\|_\infty\to 0$. Thus $H$ is continuous.\\
\noindent Finally, define $h(s,v):=H(sT,v)$, for  $(s,v)\in[0,1]\times C_\ell$. Then $h$ is continuous and satisfies
\[
h(0,v)=v_*,
\qquad
h(1,v)=v
\quad\text{for every }v\in C_\ell.
\]
Therefore, $C_\ell$ is contractible. Since
$\ell\in\mathcal R_0$ was arbitrary, all the values of
$\mathcal G_K$ are contractible.\vspace{-0.5mm}

\noindent \emph{Construction of a compact set ${\cal R}$:} The aim here is to construct a compact set ${\cal R} \subset {\cal C}([0,T])$ such that $\mathcal{G}_K$ maps ${\cal R}$ into ${\cal R}$ in order to apply the fixed point lemma. So let  $n\in \mathbb{N}$ and  define ${\cal R}_{n+1}:=\overline{\operatorname{co}}\,\mathcal{G}_K({\cal R}_n)$ and set ${\cal R}=\bigcap_n {\cal R}_n$. Then, since ${\cal R}_0$ is convex and $\mathcal{G}_K({\cal R}_0)\subset \mathcal{R}_0$, we get that ${\cal R}_{n+1}\subset {\cal R}_n$ for all $n\in \mathbb{N}$. Moreover, it is clear that ${\cal R}$ is nonempty, convex, closed and uniformly integrable and $\mathcal{G}_K({\cal R})\subset {\cal R}$.\\
\noindent \emph{Claim 1}:  The set ${\cal R}$ is uniformly bounded and equicontinuous.\\ 
\noindent \emph{Proof of Claim 1}: Since ${\cal R}\subset {\cal R}_1=\overline{\operatorname{co}}\,\mathcal{G}_K({\cal R}_0)$, the set ${\cal R}$ is uniformly bounded in ${\cal C}([0,T];\H)$. For equicontinuity, recall from the contractibility argument that every element of $\mathcal{G}_K({\cal R}_0)$ is a trajectory whose velocity is dominated a.e. by the fixed function $q\in L^1(0,T)$, $q(t):=2\gamma(t)R(t)+L_C+2\gamma(t)+2d(t)$. This pointwise bound is preserved under convex combinations and under uniform limits, hence $\Vert \dot u(t)\Vert\le q(t)$ a.e. for every $u\in{\cal R}_1$, and in particular for every $u\in{\cal R}$. Consequently
$$\Vert u(t)-u(t')\Vert\le\int_{t'}^{t} q(\tau)\,d\tau\qquad\text{for all }t,t'\in[0,T]\text{ and all }u\in{\cal R},$$
and since $q\in L^1(0,T)$ its integral is (uniformly) continuous; therefore ${\cal R}$ is equicontinuous. \\
\noindent \emph{Claim 2}:  The set ${\cal R}$ is relatively compact.\\
\noindent \emph{Proof of Claim 2}: To prove that the set ${\cal R}$ is relatively compact, let us consider, for $t\in [0,T]$, the sets ${\cal R}(t):=\{x(t)\colon x\in {\cal R}\}$ and ${\cal R}_n(t):=\{ x(t) : \, x\in {\cal R}_n\}$ for all $n\in \mathbb{N}$. We proceed to show that $\beta({\cal R}(t))=0$ for all $t\in [0,T]$, which, by means of the Arzel\`a-Ascoli theorem (together with Claim~1) would imply the compactness of the set ${\cal R}$. Define $\varrho(t):=\beta({\cal R}(t))$ and $\varrho_n(t):=\beta({\cal R}_n(t))$ (note that $\varrho$, $\varrho_n$ are unrelated to the prox-regularity radius $\rho$). Then, 
\begin{equation*}
\begin{aligned}
\varrho_{n+1}(t)&=\beta(\{u(t)\colon u\in \overline{\operatorname{co}}\,\mathcal{G}_K({\cal R}_n)\})\\
&\leq \beta(\{u(t)\colon u\in \mathcal{G}_K({\cal R}_n)\})\\
&=\beta(\{(S_Kw)(t)\colon w\in \operatorname{Sel}_{F_K}({\cal R}_n)\}).
\end{aligned}
\end{equation*}
Fix $t\in [0,T]$. According to Lemma \ref{Lemma-beta}, for any $\varepsilon>0$ there exists a sequence $\{w^n_k\}_k$ in $\operatorname{Sel}_{F_K}({\cal R}_n)$ such that 
$$
\beta(\{(S_Kw)(t)\colon w\in \operatorname{Sel}_{F_K}({\cal R}_n)\})\leq 2\beta(\{(S_Kw^n_k)(t)\colon k\geq 1\})+\varepsilon.
$$
Since $S_K(w)=S(w/K)$ and $\beta$ is positively homogeneous, applying Proposition~\ref{propest} to the sequence $(w^n_k/K)_k \subset \mathcal{D}_d$ (note that $\Vert w^n_k(t)/K\Vert\le d(t)$ a.e.\ because $c\equiv0$, and that $S$ coincides with the solution operator of Proposition~\ref{propest} for the above choice of $m$, by Remark~\ref{rem:any-m}) yields
\begin{equation*}
\begin{aligned}
\beta(\{(S_Kw^n_k)(t)\colon k\geq 1\})
&=\beta\big(\{S(w^n_k/K)(t)\colon k\geq 1\}\big)\\
&\leq \int_0^t \exp\Big(\int_s^t v(\tau)\,d\tau\Big)\,\beta\Big(\big\{\tfrac{w_k^n(s)}{K} : k\geq 1\big\}\Big)\, ds\\
&= \frac{1}{K}\int_0^t \exp\Big(\int_s^t v(\tau)\,d\tau\Big)\,\beta\big(\{w_k^n(s) : k\geq 1\}\big)\, ds.
\end{aligned}
\end{equation*}
By the very choice $K\geq \exp\big(\int_0^T v(\tau)\,d\tau\big)$ one has $\tfrac{1}{K}\exp\big(\int_s^t v\big)\leq 1$ for all $0\leq s\leq t\leq T$, whence
\begin{equation*}
\begin{aligned}
\beta(\{(S_Kw^n_k)(t)\colon k\geq 1\}) &\leq \int_0^t  
\beta\Big(\{w_k^n(s) : k\geq 1\}\Big)\, ds.
\end{aligned}
\end{equation*}
Moreover, since $w_k^n\in\operatorname{Sel}_{F_K}({\cal R}_n)$, we have
$\{w_k^n(s):k\ge1\}\subset F_K(s,{\cal R}_n(s))$ for a.e. $s\in [0, T]$, hence
\begin{align*}
\beta\big(\{w_k^n(s):k\ge1\}\big)\le \beta\big(F_K(s,{\cal R}_n(s))\big).
\end{align*}
So we have that
\begin{equation*}
\begin{aligned}
\varrho_{n+1}(t)& \leq 2\int_0^t
\beta\Big(F_K(s,{\cal R}_n(s))\Big)\, ds + \varepsilon\\
& =  2K\int_0^t
\beta\Big(F(s,{\cal R}_n(s))\Big)\, ds + \varepsilon\\
&\leq 2K\int_0^t k_K(s)\beta({\cal R}_n(s))\, ds+\varepsilon,\\
\end{aligned}
\end{equation*}
where in the last inequality we have used \ref{H4F} with $r=K$ (recall ${\cal R}_n(s)\subset K\mathbb{B}$). Letting $\varepsilon\downarrow 0$, we obtain, for all $t\in [0,T]$,
$$
\varrho_{n+1}(t)\leq 2K\int_0^t k_K(s)\varrho_n(s)\,ds.
$$
On the other hand, $0\leq \varrho_{n+1}(t)\leq \varrho_n(t)$ for all $n\in \mathbb{N}$. Hence, $\varrho_{\infty}(t):=\lim\limits_{n\to +\infty}  \varrho_n(t)$ is well-defined for all $t\in [0,T]$, $\varrho_{\infty}(0)=0$, and 
for all $t\in [0,T]$
\begin{equation*}
\begin{aligned}
\varrho_{\infty}(t)&\leq 2K\int_0^t k_K(s)\varrho_{\infty}(s)\, ds,
\end{aligned}
\end{equation*}
which (by Gr\"onwall's lemma) implies that $\varrho_{\infty}(t)=0$ for all $t\in [0,T]$. Moreover, since $0\leq \varrho(t)\leq \varrho_n(t)$ for all $n\in \mathbb{N}$, we get that $\varrho(t)=0$ for all $t\in [0,T]$, which implies that ${\cal R}$ is relatively compact.
\vskip 0.1cm
\noindent\emph{Upper semicontinuity of $\mathcal{G}_K$:} Since ${\cal R}$ is compact in ${\cal C}([0,T];\H)$, it suffices to show that $\mathcal{G}_K$ has closed graph. Let $(\ell_n)_n\subset{\cal R}$ with $\ell_n\to\ell$ and let $u_n\in\mathcal{G}_K(\ell_n)$ with $u_n\to u$ in ${\cal C}([0,T];\H)$. Write $u_n=S_K(w_n)$ with $w_n\in\operatorname{Sel}_{F_K}(\ell_n)$, so that $w_n(t)\in F_K(t,\ell_n(t))$ a.e. Because $c\equiv0$ in this step, $\Vert w_n(t)/K\Vert\le d(t)$ a.e. $t\in [0,T]$. Hence $(w_n)_n$ is bounded and uniformly integrable in $L^1([0,T];\H)$, and for a.e.\ $t$ the values $w_n(t)$ lie in the weakly compact set $K d(t)\mathbb{B}$. By the Dunford--Pettis theorem, along a subsequence $w_n\rightharpoonup w$ in $L^1([0,T];\H)$. Using Mazur's lemma to pass to strongly convergent convex combinations, the convexity of the values of $F$, assumption \ref{H2F} (the graph of $F(t,\cdot)$ is closed in $\H\times\H_w$) and $\ell_n(t)\to\ell(t)$, the classical closure theorem for sequences of measurable selections (see, e.g., \cite{Castaing_Valadier_1977, Aubin_Frankowska_2009_book}) yields $w(t)\in F_K(t,\ell(t))$ a.e., i.e., $w\in\operatorname{Sel}_{F_K}(\ell)$. Finally, each $u_n$ solves the reduced inclusion with forcing $w_n/K$ and its velocity is dominated by the common $q\in L^1(0,T)$; passing to the limit as in the proof of \cite[Theorem~5.2]{Pedro-Manuel-Emilio-2024} (upper semicontinuity of the prox-regular normal cone, together with $u_n\to u$ and $w_n/K\rightharpoonup w/K$) shows that $u$ solves the same inclusion with forcing $w/K$. By the uniqueness in Proposition~\ref{propest}, $u=S_K(w)\in\mathcal{G}_K(\ell)$. Thus $\mathcal{G}_K$ has closed graph and, taking values in the compact set ${\cal R}$, is upper semicontinuous with closed values.

So all the hypotheses of Lemma \ref{Fixed-point} are satisfied, and hence there exists a fixed point $x$ of $\mathcal{G}_K$, so that $x$ is a solution of our perturbed-Volterra sweeping process.\\
\noindent\emph{Step 2: The general case, i.e., $c\neq 0$.} Consider the differential inclusion given by
\begin{equation}\label{truncated-problem}
\left\{
\begin{aligned}
    \dot{x}(t) &\in -N_{C(t)}(x(t)) + f_1(t, x(t)) + \int_0^t f_2(t, s, x(s))\, ds + G(t,x(t)) \quad \text{ a.e. } t\in [0,T],\\
    x(0)&=x_0\in C(0).
    \end{aligned}
    \right.
\end{equation}
where $G\colon [0, T]\times \H\rightrightarrows \H$ is a multivalued map defined by
\begin{align*}
G(t,x):=
F\bigl(t,\operatorname{proj}_{\theta(t)\mathbb B}(x)\bigr)
\cap (c(t) \Vert \operatorname{proj}_{\theta(t)\mathbb B}(x)\Vert
+d(t))\mathbb{B}.
\end{align*}
By \ref{H3F}, $\Vert F(t,\operatorname{proj}_{\theta(t)\mathbb B}(x))\Vert\leq c(t)\Vert\operatorname{proj}_{\theta(t)\mathbb B}(x)\Vert+d(t)$, the set $F(t,\operatorname{proj}_{\theta(t)\mathbb B}(x))$ is contained in the ball defining $G$, so the intersection is redundant and $G(t,x)=F(t,\operatorname{proj}_{\theta(t)\mathbb B}(x))$. In particular $G(t,x)$ is nonempty, and, since $\Vert\operatorname{proj}_{\theta(t)\mathbb B}(x)\Vert\leq\min\{\Vert x\Vert,\theta(t)\}$, it satisfies the linear growth condition \ref{H3F} with the same functions $c$ and $d$ as $F$: namely 
$$
\Vert G(t,x)\Vert=\sup\{\Vert w\Vert:w\in G(t,x)\}\leq c(t)\Vert x\Vert+d(t) \textrm{ for all }x\in\H \textrm{ and a.e. } t\in[0,T].
$$

\medskip
\noindent\emph{Step 2.1: Verification of the assumptions of Step~1 for $G$.}
The set-valued mapping $G\colon [0, T]\times \H \rightrightarrows \H$ defined by
\begin{align*}
G(t,x):=
F\bigl(t,\operatorname{proj}_{\theta(t)\mathbb B}(x)\bigr)
\cap (c(t) \Vert \operatorname{proj}_{\theta(t)\mathbb B}(x)\Vert
+d(t))\mathbb{B}
\end{align*}
satisfies
\begin{enumerate}[label=\roman{*})]
    \item $G(t,x)$ has nonempty, convex, and
closed values.

    \item The map $t\rightrightarrows \operatorname{gph} G(t,\cdot)$ is measurable.

    \item For a.e. $t\in I$, $\operatorname{gph} G(t,\cdot)$ is closed on $\H\times \H_w$.

    \item For all $x\in \H$ and a.e. $t\in I$
        \begin{equation*}
            \begin{aligned}
                \Vert G(t,x)\Vert:=\sup\{\Vert w\Vert \colon w\in G(t,x)\}\leq c(t) \theta(t) +d(t).
            \end{aligned}
        \end{equation*}
    \item Compactness assumption on $G\left(\cdot,\cdot\right)$.
\end{enumerate}
By virtue \cite[Lemma 3.2]{Pedro-Manuel-Emilio-2024}, we have $i)$-$iv)$. Moreover, the set-valued mapping $G$ also satisfies $v)$. Indeed, let $r > 0$ and let $A\subset \H$ be a bounded set included in the ball $r\mathbb{B}$. Fix $t\in [0, T]$. Since $\theta(t)\mathbb B$ is a nonempty closed convex subset of $\H$,  the metric projection $\operatorname{proj}_{\theta(t)\mathbb B}:\H\to \theta(t)\mathbb B$ is nonexpansive. Therefore, by \cite[Lemma~2.7]{Pedro-Manuel-Emilio-2024}, we get that
\begin{equation}\label{Proyect}
   \beta\bigl(\operatorname{proj}_{\theta(t)\mathbb B}(A)\bigr) \leq \sqrt{2}\beta(A),
\end{equation}
where $\beta$ is the Hausdorff (or Kuratowski) measure of noncompactness. Since $\theta$ is continuous on $[0,T]$, setting $R_*:=\Vert \theta\Vert_{\infty}$, we have $\operatorname{proj}_{\theta(t)\mathbb B}(A)\subset \theta(t)\mathbb{B}\subset R_{*}\mathbb{B}$.
Therefore, by assumption \ref{H4F} and by definition of $G$ together with \eqref{Proyect}, we have
\begin{align*}
\beta\bigl(G(t,A)\bigr) \leq \beta\bigl(F(t,\operatorname{proj}_{\theta(t)\mathbb{B}}(A))\bigr) \leq k_{R_*}(t)\beta\bigl(\operatorname{proj}_{\theta(t)\mathbb{B}}(A)\bigr) 
\leq \sqrt{2} k_{R_*}(t)\beta(A).
\end{align*}

Thus, $G$ satisfies assumptions $i)$-$v)$. Consequently, the existence of solutions result established in \textit{Step~1} applies to the truncated problem \eqref{truncated-problem}, and therefore there exists an absolutely continuous map $x\colon I\to \H$ solution to \eqref{truncated-problem}.

\medskip
\noindent\emph{Step 2.2: Elimination of the truncation.} Let $x(\cdot)$ be an absolutely continuous solution of the truncated problem \eqref{truncated-problem}. Since $G$ satisfies \ref{H3F} with the same functions $c$ and $d$ as $F$, Proposition~\ref{Main_Result_Red} applied to \eqref{truncated-problem} yields
\begin{align*}
 \Vert x(t)\Vert\leq \theta(t)\quad \text{for all } t\in[0,T].
\end{align*}
Consequently, $\Vert x(t)\Vert\leq\theta(t)$ implies $\operatorname{proj}_{\theta(t)\mathbb B}(x(t))=x(t)$ for all $t\in[0,T]$, and hence, by definition of $G$,
\begin{align*}
G(t,x(t)) &=
F\bigl(t,\operatorname{proj}_{\theta(t)\mathbb B}(x)\bigr)
\cap (c(t) \Vert \operatorname{proj}_{\theta(t)\mathbb B}(x)\Vert
+d(t))\mathbb{B}\\ \\
&=
F(t,x(t))\cap \bigl(c(t)\Vert x(t)\Vert + d(t)\bigr)\mathbb{B}\\ \\
&= F\bigl(t,x(t)\bigr)\quad \text{ for a.e. } t\in [0, T],
\end{align*}
where in the last step we have used \ref{H3F}. Therefore, $x(\cdot)$ is a solution of the original problem
\begin{equation*}
\left\{
\begin{aligned}
    \dot{x}(t) &\in -N_{C(t)}(x(t)) + f_1(t, x(t)) + \int_0^t f_2(t, s, x(s))\, ds + F(t,x(t)) \quad \text{ a.e. } t\in [0,T],\\
    x(0)&=x_0\in C(0).
    \end{aligned}
    \right.
\end{equation*}

\medskip
\noindent\emph{Step 2.3: Estimates for state  and velocity.}
Finally, from Proposition \ref{Main_Result_Red}, we have that
\begin{equation*}
            \begin{aligned}
            \Vert x(t)\Vert &\leq \theta(t) & \textrm{ for all } t\in [0, T].
        \end{aligned}
    \end{equation*}
    Moreover, one has $ \Vert \dot{x}(t)\Vert \leq 2 \left(\gamma(t) + c(t)\right)\theta(t)+\varepsilon(t)$ for a.e. $t\in [0,T]$, which finishes the proof.
\end{proof}

\section{Application to a viscoelastic contact problem with memory}
\label{sec:viscoelastic-contact}

We illustrate Theorem~\ref{existencia-sol} by considering a quasistatic
frictionless contact problem for a viscoelastic body with hereditary memory.
Evolutionary variational inequalities and sweeping-process formulations of
contact problems for viscoelastic materials have been studied extensively; see,
for instance,
\cite{HanSofonea2000,AdlySofonea2019,Vilches2022Contact,
NacrySofonea2022Contact}.

Let $\Omega\subset\mathbb{R}^{d}$, $d\in\{2,3\}$, be a bounded domain with
Lipschitz boundary $\partial\Omega=\overline{\Gamma_D}\cup\overline{\Gamma_N}
\cup\overline{\Gamma_C}$, where the three parts are mutually disjoint and
$\operatorname{meas}(\Gamma_D)>0$. We set
\[
V:=
\left\{
v\in H^{1}(\Omega;\mathbb{R}^{d}):
v=0\ \text{on }\Gamma_D
\right\},
\]
and denote by $\varepsilon(v):=
\frac{1}{2}\bigl(\nabla v+\nabla v^{\top}\bigr)$ the linearized strain tensor. The space $V$ is endowed with its usual norm,
which is equivalent, by Korn's inequality, to $\|v\|_{V}:=
\|\varepsilon(v)\|_{L^{2}(\Omega;\mathbb{S}^{d})}$. Let $A,B\colon V\to V^{*}$ be the operators 
\[
\langle Av,w\rangle
=
\int_{\Omega}
\mathbb{A}(x)\varepsilon(v(x)):\varepsilon(w(x))\,dx \,\textrm{ and }\, \langle Bv,w\rangle
=
\int_{\Omega}
\mathbb{B}(x)\varepsilon(v(x)):\varepsilon(w(x))\,dx.
\]
We assume that $A$ is bounded, symmetric, and strongly
coercive, namely, there exist constants $M_A,m_A>0$ such that
\[
|\langle Av,w\rangle|
\leq M_A\|v\|_{V}\|w\|_{V} \textrm{ and }  \langle Av,v\rangle
\geq m_A\|v\|_{V}^{2}\quad  \textrm{ for all }  v,w\in V.
\]
We also assume that $B$ is bounded. For $(t,s)\in D:=\{(t,s)\in I^{2}:s\leq t\}$, let $C(t,s)\colon V\to V^{*}$ be a bounded linear operator representing the hereditary response of the
material. We assume that $(t,s)\mapsto C(t,s)v$ is measurable for every
$v\in V$ and that there exists $\kappa\in L^{1}(I)$ such that
\begin{equation}\label{eq:memory-operator-bound}
\|A^{-1}C(t,s)\|_{\mathcal{L}(V)}
\leq \kappa(t)
\quad\text{for a.e. }(t,s)\in D.
\end{equation}
The constitutive law corresponding to these operators is formally given by
\[
\sigma(t)
=
\mathbb{A}\varepsilon(\dot u(t))
+\mathbb{B}\varepsilon(u(t))
+\int_{0}^{t}
\mathbb{C}(t,s)\varepsilon(u(s))\,ds.
\]
Thus, the present stress depends both on the current strain and on the past
history of the deformation. Let $\gamma_\nu\colon V\to L^{2}(\Gamma_C)$ denote the normal trace operator.
We assume that the gap between the body and the obstacle is represented by a function $p\in W^{1,\infty}(I;V)$. Define
\[
K_0:=
\left\{
v\in V:
\gamma_\nu v\leq0
\quad\text{a.e. on }\Gamma_C
\right\}
\]
and $K(t):=p(t)+K_0$. Equivalently, $K(t)
=
\left\{
v\in V:
\gamma_\nu v\leq \gamma_\nu p(t)
\quad\text{a.e. on }\Gamma_C
\right\}$. The condition $u(t)\in K(t)$ represents the nonpenetration constraint; the mechanical configuration is illustrated in Figure~\ref{fig:contact-setup}.
\begin{figure}[htbp]
\centering
\begin{tikzpicture}[>={Stealth[length=2.4mm]},line join=round]
  \fill[pattern=north east lines,pattern color=black!45] (-0.7,-0.95) rectangle (6.7,0);
  \draw[very thick] (-0.7,0) -- (6.7,0);
  \node[below] at (3,-0.95) {rigid foundation};
  \fill[pattern=north east lines,pattern color=black!45] (-0.95,0.8) rectangle (0,3.4);
  \draw[very thick] (0,0.8) -- (0,3.4);
  \node[left] at (-0.95,2.1) {$\Gamma_D$};
  \fill[blue!8] (0,0.8) rectangle (6,3.4);
  \draw[blue!55!black,thick] (0,3.4) -- (6,3.4) -- (6,0.8);
  \draw[blue!55!black,thick] (0,0.8) -- (0,3.4);
  \node at (2.5,2.55) {$\Omega$};
  \begin{scope}[thin]
    \draw (4.05,2.75)--(4.6,2.75); \draw (4.05,1.35)--(4.6,1.35);
    \draw (4.325,2.95)--(4.325,2.75); \draw (4.325,1.35)--(4.325,1.15);
    \draw[decorate,decoration={coil,aspect=0.6,segment length=4pt,amplitude=3.2pt}]
        (4.15,2.75) -- (4.15,1.35);
    \draw (4.42,1.35)--(4.42,1.95)--(4.7,1.95)--(4.7,1.35);
    \draw (4.56,2.75)--(4.56,2.10); \draw[line width=1.6pt] (4.44,2.10)--(4.68,2.10);
  \end{scope}
  \draw[blue!70!black,line width=1.4pt] (0,0.8) -- (6,0.8);
  \node[blue!70!black] at (0.75,0.52) {\small$\Gamma_C$};
  \node[right] at (6.02,1.5) {$\Gamma_N$};
  \draw[->,thick] (2.0,0.8) -- (2.0,0.25);
  \node[right] at (2.05,0.52) {$\nu$};
  \draw[{Bar[width=2mm]}-{Bar[width=2mm]}] (6.25,0) -- (6.25,0.8);
  \node[right] at (6.32,0.4) {$g(t)$};
  \foreach \x in {1.1,2.2,3.3,4.4} {\draw[->,thick] (\x,4.2) -- (\x,3.43);}
  \node at (2.75,4.5) {external loads $f(t),\ \mathcal{Q}(t)$};
\end{tikzpicture}
\caption{Quasistatic frictionless contact of a viscoelastic body with memory. The body $\Omega$ is clamped along $\Gamma_D$, subjected to the nominal load $f(t)$ and the uncertain, set-valued load $\mathcal{Q}(t)$ on $\Gamma_N$, and may come into contact with a rigid foundation along $\Gamma_C$, from which it is separated by the gap $g(t)$; here $\nu$ denotes the outward unit normal. The spring-dashpot element depicts the viscoelastic constitutive law with long memory, corresponding to the operators $\mathbb{A}$, $\mathbb{B}$ and the memory kernel $\mathbb{C}(t,s)$.}
\label{fig:contact-setup}
\end{figure}
Let $f\in L^{1}(I;V^{*})$ denotes the nominal body and surface forces. To allow
for uncertain or set-valued external loads, let $\mathcal{Q}\colon I\rightrightarrows V^{*}$ be a measurable set-valued mapping with nonempty compact convex values such
that
\begin{equation}\label{eq:load-bound}
\sup_{q\in\mathcal{Q}(t)}
\|A^{-1}q\|_{V}
\leq d(t)
\quad\text{for a.e. }t\in I
\end{equation}
for some $d\in L^{1}(I)$. We consider the following problem: find $u\in\operatorname{AC}(I;V)$ and a measurable selection $q(t)\in\mathcal{Q}(t)$ for a.e. $t\in I$  such that
\begin{equation}\label{eq:contact-membership}
u(t)\in K(t)
\quad\text{for every }t\in I,
\end{equation}
and
\begin{equation}\label{eq:contact-VI}
\begin{aligned}
\Big\langle
&A\dot u(t)+Bu(t)
+\int_{0}^{t}C(t,s)u(s)\,ds
-f(t)-q(t),
v-u(t)
\Big\rangle
\geq0
\end{aligned}
\end{equation}
for every $v\in K(t)$ and for a.e. $t\in I$, with $u(0)=u_0\in K(0)$. Formally, the contact conditions associated with
\eqref{eq:contact-membership}-\eqref{eq:contact-VI} are
\[
u_\nu(t)\leq g(t),\qquad
\sigma_\nu(t)\leq0,\qquad
\sigma_\nu(t)\bigl(u_\nu(t)-g(t)\bigr)=0,
\qquad
\sigma_\tau(t)=0
\quad\text{on }\Gamma_C,
\]
where $g(t):=\gamma_\nu p(t)$. The last relation expresses the absence of
tangential friction.

We now rewrite \eqref{eq:contact-VI} as a Volterra sweeping process. Since
$A$ is symmetric and coercive, $V$ becomes a Hilbert space, denoted by
$\mathcal{H}_A$, when endowed with the scalar product
\[
(v,w)_A:=\langle Av,w\rangle.
\]
Let $N_{K(t)}^{A}$ denote the normal cone to $K(t)$ with respect to this
Hilbertian structure. Then
\begin{equation*}
N_{K(t)}^{A}(u)
=
A^{-1}N_{K(t)}^{V^{*}}(u),
\end{equation*}
where $N_{K(t)}^{V^{*}}(u)
:=
\left\{
\xi\in V^{*}:
\langle\xi,v-u\rangle\leq0
\quad\text{ for all }v\in K(t)
\right\}$. Hence, \eqref{eq:contact-VI} is equivalent to
\begin{equation*}
\begin{aligned}
\dot u(t)\in{}&
-N_{K(t)}^{A}(u(t))
+f_1(t,u(t))
+\int_{0}^{t}f_2(t,s,u(s))\,ds
+F(t,u(t)) \quad \textrm{for a.e. } t\in I, 
\end{aligned}
\end{equation*}
where $f_1(t,v):=A^{-1}f(t)-A^{-1}Bv$, 
 $f_2(t,s,v):=-A^{-1}C(t,s)v$, and $F(t,v):=A^{-1}\mathcal{Q}(t)$.

\begin{proposition}\label{prop:viscoelastic-contact}
Assume that the preceding hypotheses hold. Then, for every
$u_0\in K(0)$, there exist $u\in\operatorname{AC}(I;\mathcal{H}_A)$ and a measurable function $q\colon I\to V^{*}$ satisfying
$q(t)\in\mathcal{Q}(t)$ for a.e. $t\in I$, such that
\eqref{eq:contact-membership} and \eqref{eq:contact-VI} hold.
\end{proposition}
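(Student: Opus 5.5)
We apply Theorem~\ref{existencia-sol}(b) in the Hilbert space $\H:=\H_A$, which is separable because it is $V$ with an equivalent inner product and $V$ is a closed subspace of $H^1(\Omega;\mathbb{R}^d)$. The Volterra sweeping process written just before the statement is then an instance of \eqref{Problema}. So the plan is to check the hypotheses, obtain a solution $u$, and go back to \eqref{eq:contact-VI} through a measurable selection.

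\emph{Step 1: the moving set.} $K_0$ is a closed convex cone in $V$, since $\gamma_\nu$ is continuous into $L^2(\Gamma_C)$ and the pointwise order condition is closed and convex. Hence every $K(t)=p(t)+K_0$ is closed, convex and nonempty, so it is $\rho$-prox-regular with $\rho=+\infty$. For translates of a fixed set,
\[
\operatorname{Haus}(K(t),K(s))\le\|p(t)-p(s)\|_A\le \sqrt{M_A}\,\|p(t)-p(s)\|_V\le \sqrt{M_A}\,\|\dot p\|_{L^\infty}|t-s|,
\]
so \ref{HC} holds with $L_C=\sqrt{M_A}\|\dot p\|_{L^\infty(I;V)}$. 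Also $u_0\in K(0)$. Since $\|\cdot\|_A$ and $\|\cdot\|_V$ are equivalent, and $A^{-1}$ is bounded from $V^*$ to $V$, all $L^1$ bounds pass from one norm to the other up to constants.

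\emph{Step 2: the single-valued data.} Take $f_1(t,v)=A^{-1}f(t)-A^{-1}Bv$. It is measurable in $t$ because $f\in L^1(I;V^*)$, and it is globally Lipschitz in $v$ with constant $\|A^{-1}B\|$. It satisfies \ref{Hf}(c) with $a_1(t)=c_0(\|f(t)\|_{V^*}+1)$ for a suitable constant $c_0$. Take $f_2(t,s,v)=-A^{-1}C(t,s)v$. It is measurable by assumption, and \eqref{eq:memory-operator-bound} gives both Lipschitz modulus $\ell^2_r=c_0\kappa$ and \ref{Hg}(c) with $a_2(t,s)=c_0\kappa(t)$. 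This function is integrable on $D$, since $\int_D\kappa(t)\,d(t,s)\le T\|\kappa\|_{L^1}$.

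\emph{Step 3: the perturbation, the main check.} Here $F(t,v)=A^{-1}\mathcal{Q}(t)$ does not depend on $v$. Its values are nonempty, convex and compact, being images of compact convex sets under a linear homeomorphism. Measurability of $t\rightrightarrows \operatorname{gph}F(t,\cdot)=\H\times A^{-1}\mathcal{Q}(t)$ follows from that of $\mathcal{Q}$. The graph of $F(t,\cdot)$ is closed in $\H\times\H_w$ because $A^{-1}\mathcal{Q}(t)$ is convex and closed, hence weakly closed. The growth condition \ref{H3F} holds with $c\equiv0$ and the given $d$ (up to the norm constant), by \eqref{eq:load-bound}. Finally, for any $A'\subset r\mathbb{B}$ we have $F(t,A')=A^{-1}\mathcal{Q}(t)$, which is compact, so $\beta(F(t,A'))=0$. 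This gives \ref{H4F} with $k_r\equiv0$.

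I do not need $K(t)$ to be ball-compact, and it is not. The delicate points are only the measurability verification and the weak closedness; both are routine given compact convex values.

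\emph{Step 4: conclusion.} Theorem~\ref{existencia-sol} yields $u\in\operatorname{AC}(I;\H_A)$ with $u(t)\in K(t)$ for all $t$, which is \eqref{eq:contact-membership}. It also gives a measurable $g(t)\in A^{-1}\mathcal{Q}(t)$ such that $\dot u(t)-f_1-\int_0^t f_2+\dots-g(t)\in -N^A_{K(t)}(u(t))$; the selection is part of the notion of solution. Set $q(t):=Ag(t)$. This is measurable, since $A$ is continuous, and $q(t)\in\mathcal{Q}(t)$. Using $N^A_{K(t)}=A^{-1}N^{V^*}_{K(t)}$ and applying $A$, we get
\[
-\bigl(A\dot u+Bu+\textstyle\int_0^t C(t,s)u(s)\,ds-f-q\bigr)\in N^{V^*}_{K(t)}(u(t)),
\]
which is exactly \eqref{eq:contact-VI}.
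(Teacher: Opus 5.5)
Your proposal is correct and follows the paper's proof. Like the paper, you check $(\mathcal{H}^C)$ through translates of the closed convex cone $K_0$, the Lipschitz and growth conditions for $f_1$ and $f_2$, and $(\mathcal{H}^F)$ with $c\equiv 0$ and $(\mathcal{H}^F_{\mathrm{Comp}})$ with $k_r\equiv 0$, then invoke Theorem~\ref{existencia-sol}(b) in $\mathcal{H}_A$; you also make explicit several points the paper leaves implicit: the norm-equivalence constants, the separability of $\mathcal{H}_A$, and the return to \eqref{eq:contact-VI} via the measurable selection $q:=Ag$ and $N^A_{K(t)}=A^{-1}N^{V^*}_{K(t)}$.
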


\begin{proof}
For every $t\in I$, the set $K(t)$ is nonempty, closed, and convex.
Therefore, it is uniformly prox-regular with arbitrary prox-regularity
radius. Moreover, since $K(t)=p(t)+K_0$, we have
\[
\operatorname{Haus}_{A}(K(t),K(s))
\leq
\|p(t)-p(s)\|_{A}
\leq
\|\dot p\|_{L^\infty(I;\mathcal{H}_A)}|t-s|.
\]
Thus, the moving set $K$ satisfies $(\mathcal{H}^{C})$.

The function $f_1(t,v)=A^{-1}f(t)-A^{-1}Bv$ is measurable with respect to $t$, globally Lipschitz with respect to $v$,
and satisfies a linear growth condition. Similarly,
\eqref{eq:memory-operator-bound} implies that
\[
\|f_2(t,s,v)-f_2(t,s,w)\|_{A}
\leq
\kappa(t)\|v-w\|_{A} \textrm{ and } \|f_2(t,s,v)\|_{A}
\leq
\kappa(t)(1+\|v\|_{A}).
\]
Hence, $f_1$ and $f_2$ satisfy
$(\mathcal{H}^{f}_{1})$ and $(\mathcal{H}^{f}_{2})$.

The mapping $F(t,v)=A^{-1}\mathcal{Q}(t)$ is independent of $v$ and has nonempty compact convex values. Its graph is
measurable, and \eqref{eq:load-bound} gives
\[
\|F(t,v)\|
\leq d(t) \quad \textrm{ for every }  v\in\mathcal{H}_A \textrm{ and for a.e. } t\in I.
\]
 Therefore,
$(\mathcal{H}^{F})$ holds with $c\equiv0$.

Finally, if $D_0\subset r\mathbb{B}$ is arbitrary, then $F(t,D_0)=A^{-1}\mathcal{Q}(t)$. Since $A^{-1}\mathcal{Q}(t)$ is compact, $\beta(F(t,D_0))=0$. Thus, $(\mathcal{H}^{F}_{\mathrm{Comp}})$ holds in the degenerate form $k_r(t)=0$ for every $r>0$. The conclusion follows from Theorem~\ref{existencia-sol}, applied in the
Hilbert space $\mathcal{H}_A$.
\end{proof}
\begin{remark}
The ball-compactness alternative generally does not apply to this problem.
Indeed, the set $V_0:=
\left\{
v\in V:
\gamma_\nu v=0 \text{ on }\Gamma_C
\right\}$ is an infinite-dimensional closed subspace of $V$, and $p(t)+V_0\subset K(t)$. Consequently, for sufficiently large $r>0$, the set $K(t)\cap r\mathbb{B}$ contains a bounded noncompact subset. Therefore, $K(t)$ is not
ball-compact in general. This shows that Proposition
\ref{prop:viscoelastic-contact} is a genuine application of the compactness
condition imposed on the perturbation $F$, albeit of its degenerate instance $k_r\equiv0$ (a perturbation with values in a fixed compact set). The full strength of $(\mathcal{H}^{F}_{\mathrm{Comp}})$, with a nontrivial modulus $k_r$, is exercised in the fishery model of Section~\ref{sec:fishery-application}.
\end{remark}
\begin{remark}
The deterministic contact problem is recovered by taking $\mathcal{Q}(t)=\{q(t)\}$ for a given $q\in L^{1}(I;V^{*})$. More generally, a finite-dimensional
family of uncertain loads can be considered by setting
\[
\mathcal{Q}(t)
=
\left\{
\sum_{j=1}^{m}\alpha_jq_j(t):
\alpha\in U(t)
\right\},
\]
where $U(t)\subset\mathbb{R}^{m}$ is nonempty compact and convex. In this
case, $\mathcal{Q}(t)$ is compact in $V^{*}$ even though the state space
$V$ is infinite-dimensional.
\end{remark}

\section{A Fishery Model with Ecological Memory}
\label{sec:fishery-application}

Spatial structure and intertemporal exploitation are central features of
bioeconomic fishery models; see, for instance,
\cite{SanchiricoWilen1999}. Modern harvest-control rules are commonly
formulated in terms of biomass limit and target reference points
\cite{KvamsdalEtAl2016}, while viability approaches impose biological and
economic constraints that must be satisfied along the entire evolution
\cite{BeneDoyenGabay2001,CuryEtAl2005}. Memory effects are also relevant in
ecological dynamics because past population and environmental states can
affect present recruitment, resilience, and spatial distribution
\cite{KhalighiEtAl2022,WangFanWang2022}. Motivated by these ingredients, we
consider a spatially distributed fishery whose biomass is constrained to
remain above a time-dependent conservation threshold.

\noindent Let $\Omega\subset\mathbb R^d$ be a bounded measurable set with positive
Lebesgue measure and set $\H:=L^2(\Omega)$. For $x\in\H$, the value $x(\xi)$ represents the biomass density at the spatial
location $\xi\in\Omega$. Let $b\colon I\to\H$ satisfy
\begin{equation*}
 b(t)\geq0\quad\text{a.e. on }\Omega,
 \qquad
 \|b(t)-b(s)\|\leq L_b|t-s|
 \quad\text{for all }s,t\in I,
\end{equation*}
for some $L_b\geq0$. The function $b(t)$ is interpreted as a spatially
varying biomass limit reference profile. Define the moving safe set
\begin{equation*}
 C(t):=\{x\in\H:x(\xi)\geq b(t,\xi)\text{ for a.e. }\xi\in\Omega\}.
\end{equation*}
We next describe the unconstrained biological and management terms. Define
\[
 \psi(z):=\frac{z}{1+|z|},\qquad z\in\mathbb R.
\]
Let $r,\mu\colon I\times\Omega\to\mathbb R_+$ be measurable and assume that
\begin{equation*}
 q(t):=\|r(t,\cdot)\|_{L^\infty(\Omega)}
      +\|\mu(t,\cdot)\|_{L^\infty(\Omega)}
 \in L^1(I).
\end{equation*}
The instantaneous population dynamics are represented by
\begin{equation*}
 f_1(t,x)(\xi)
 :=r(t,\xi)\psi(x(\xi))-\mu(t,\xi)x(\xi).
\end{equation*}
Here, $r$ is the saturating recruitment and $\mu$ the natural
mortality. Let $k\colon D\times\Omega\times\Omega\to\mathbb R$ be measurable and assume
that there exists $\kappa\in L^1(I;\mathbb R_+)$ such that
\begin{equation}\label{eq:fishery-kernel-bound}
 \|k(t,s,\cdot,\cdot)\|_{L^2(\Omega\times\Omega)}\leq\kappa(t)
 \quad\text{for a.e. }(t,s)\in D.
\end{equation}
For $(t,s)\in D$, define the Hilbert-Schmidt operator
$\mathcal K(t,s)\colon\H\to\H$ by
\begin{equation*}
 (\mathcal K(t,s)x)(\xi)
 :=\int_\Omega k(t,s,\xi,\tau)x(\tau)\,d\tau,
\end{equation*}
and set $ f_2(t,s,x):=\mathcal K(t,s)x$. The resulting Volterra term can model delayed recruitment, migration from
previously occupied regions, or accumulated environmental effects.

\noindent To describe uncertain harvesting and finite-dimensional management actions,
let $\underline h,\overline h\colon I\to\mathbb R_+$ be measurable functions
such that
\begin{equation*}
 0\leq\underline h(t)\leq\overline h(t)
 \quad\text{for a.e. }t\in I,
 \qquad
 \overline h\in L^1(I).
\end{equation*}
Let $U\colon I\rightrightarrows\mathbb R^m$ be a measurable set-valued map
with nonempty compact convex values, and let
$B\colon I\to\mathcal L(\mathbb R^m,\H)$ be strongly measurable. Assume that
\begin{equation*}
 d_U(t):=\|B(t)\|\max_{u\in U(t)}|u|\in L^1(I).
\end{equation*}
Define
\begin{equation*}
 F(t,x):=
 \{-h x+B(t)u:
 h\in[\underline h(t),\overline h(t)],\ u\in U(t)\}.
\end{equation*}
The scalar $h$ represents an admissible fishing-mortality rate. The term
$B(t)u$ represents a finite-dimensional family of spatial management
interventions, such as localized restocking, habitat restoration, or
redistribution measures. The managed biomass dynamics are given by
\begin{equation}\label{eq:fishery-sweeping-process}
\left\{
\begin{aligned}
 \dot x(t)\in{}&-N(C(t);x(t))+f_1(t,x(t))
 +\int_0^t f_2(t,s,x(s))\,ds\\
 &+F(t,x(t)) &
 \textrm{ a.e. }t\in I,\\
 x(0)={}&x_0\in C(0).
\end{aligned}
\right.
\end{equation}
The normal-cone term is interpreted as an idealized harvest-control mechanism.
It vanishes while the biomass remains strictly above the conservation
threshold. At the boundary, it removes the outward component of the velocity
that would drive the biomass below the prescribed safe level (see Figure \ref{fig:fishery-sweeping}).
\begin{figure}[t]
\centering
\begin{tikzpicture}[>={Stealth[length=2.2mm]},line join=round,line cap=round]
  \def\btop{4.6}
  \fill[green!12]
    plot[smooth] coordinates {(0,1.5) (1.5,1.62) (3,1.78) (4.5,2.0) (6,2.0) (7,2.0) (8.5,2.2) (10,2.42)}
    -- (10,\btop) -- (0,\btop) -- cycle;
  \draw[->,thick] (-0.15,0) -- (10.7,0) node[right] {$t$};
  \draw[->,thick] (0,-0.15) -- (0,4.85) node[above left] {biomass};
  \node[below] at (10,-0.02) {$T$};
  \draw[dotted] (10,0) -- (10,2.42);
  \draw[green!45!black,thick,densely dashed]
    plot[smooth] coordinates {(0,1.5) (1.5,1.62) (3,1.78) (4.5,2.0) (6,2.0) (7,2.0) (8.5,2.2) (10,2.42)};
  \draw[black!45,dashed]
    plot[smooth] coordinates {(4.5,2.0) (5.2,1.7) (6,1.5) (6.6,1.45) (7,1.55)};
  \draw[blue!55!black,very thick]
    plot[smooth] coordinates {(0,3.2) (1,3.75) (2,4.05) (3,3.55) (4,2.75) (4.5,2.0)};
  \draw[blue!55!black,very thick] (4.5,2.0) -- (7,2.0);
  \draw[blue!55!black,very thick]
    plot[smooth] coordinates {(7,2.0) (8,2.75) (9,3.3) (10,3.65)};
  \foreach \x/\yb in {5.2/1.72, 6/1.52, 6.6/1.47} {\draw[->] (\x,\yb) -- (\x,1.98);}
  \draw[->,orange!85!black,thick] (3.15,3.75) -- (3.15,3.25);
  \draw[->,orange!85!black,thick] (3.85,3.05) -- (3.85,2.55);
  \draw[->,violet!70!black,thick] (7.8,2.2) -- (7.8,2.78);
  \node[green!40!black] at (7.2,4.35) {safe set $C(t)=\{x\ge b(t)\}$};
  \node[blue!55!black,anchor=west] at (2.15,4.2) {biomass $x(t)$};
  \node[green!45!black,anchor=west] at (0.15,1.2) {threshold $b(t)$};
  \node[orange!85!black,anchor=west] at (3.95,3.2) {\small harvesting $-h\,x$};
  \node[align=center] at (6,0.95) {\small harvest control\\[-1pt]$-N(C(t);x)$};
  \node[violet!70!black,anchor=west] at (7.95,2.55) {\small management $B(t)u$};
\end{tikzpicture}
\caption{Sweeping-process dynamics of the fishery model~\eqref{eq:fishery-sweeping-process}. The biomass state $x(t)$, a density field on $\Omega$, is constrained to the safe set $C(t)=\{x\ge b(t)\}$ lying above the time-dependent conservation threshold $b(t)$. Saturating recruitment drives the initial growth and harvesting $-h\,x$ drives the decline. Once the biomass reaches the threshold, the harvest-control mechanism $-N(C(t);x)$ prevents it from dropping below the safe level, whereas the dashed curve indicates the unconstrained tendency that would violate the constraint; finite-dimensional management $B(t)u$ then supports the recovery. The hereditary term $\int_0^t f_2(t,s,x(s))\,ds$ encodes the ecological memory of the system.}
\label{fig:fishery-sweeping}
\end{figure}
\begin{proposition}\label{prop:fishery-existence}
Under the preceding assumptions, for every $x_0\in C(0)$,
problem~\eqref{eq:fishery-sweeping-process} admits an absolutely continuous
solution.
\end{proposition}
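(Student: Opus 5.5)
The plan is to apply Theorem~\ref{existencia-sol}(b) in $\H=L^2(\Omega)$, checking each hypothesis in turn.

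\textbf{Hypothesis $(\mathcal H^C)$.} The set $C(t)=b(t)+L^2_+(\Omega)$ is a translate of the closed convex cone of a.e.\ nonnegative functions. Hence it is nonempty, closed and convex, so it is $\rho$-prox-regular for every $\rho$. Moreover, $\operatorname{Haus}(C(t),C(s))\le\|b(t)-b(s)\|\le L_b|t-s|$. It is worth noting that $C(t)$ is not ball-compact, so alternative (a) is unavailable; this is why we use (b).

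\textbf{Hypotheses on $f_1$ and $f_2$.} The map $\psi$ is $1$-Lipschitz with $|\psi|\le1$, and $\psi(0)=0$ gives $|\psi(z)|\le|z|$. Therefore
\[
\|f_1(t,x)-f_1(t,y)\|\le q(t)\|x-y\|
\quad\text{and}\quad
\|f_1(t,x)\|\le q(t)\|x\|\le q(t)(1+\|x\|).
\]
Measurability in $t$ follows from the measurability of $r$ and $\mu$ via Fubini/Pettis. For $f_2$, the Hilbert--Schmidt bound gives $\|\mathcal K(t,s)\|\le\kappa(t)$, so $(\mathcal H^f_2)$ holds with $\ell^2_r=\kappa$ and $a_2(t,s)=\kappa(t)$. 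The latter lies in $L^1(D)$ since $\int_D\kappa(t)\,d(t,s)\le T\|\kappa\|_{L^1}$. Measurability of $(t,s)\mapsto\mathcal K(t,s)x$ comes from Fubini applied to the measurable kernel.

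\textbf{Hypothesis $(\mathcal H^F)$.} Each value $F(t,x)=[-\overline h(t),-\underline h(t)]\,x+B(t)U(t)$ is the sum of a segment and the continuous linear image of a compact convex set. It is therefore nonempty, convex and compact, hence closed. The growth bound is $\|F(t,x)\|\le\overline h(t)\|x\|+d_U(t)$, so $c=\overline h$ and $d=d_U$.

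For graph closedness in $\H\times\H_w$, take $x_n\to x$ and $y_n=-h_nx_n+B(t)u_n\rightharpoonup y$. Extract convergent subsequences $h_n\to h$ and $u_n\to u$, which gives $y_n\to -hx+B(t)u$ strongly, so $y\in F(t,x)$.

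Measurability of $t\rightrightarrows\operatorname{gph}F(t,\cdot)$ is the point where I expect some care. I would proceed as follows:
\begin{itemize}
\item Take Castaing representations $(u_j)$ of $U$ and a countable dense set $(x_i)\subset\H$.
\item Use rational convex combinations $h=\lambda\underline h+(1-\lambda)\overline h$ with $\lambda\in[0,1]\cap\mathbb Q$.
\item Observe that $F$ is Lipschitz in $x$ with constant $\overline h(t)$, so $\operatorname{gph}F(t,\cdot)$ equals the closure of the countable family of points $(x_i,-hx_i+B(t)u_j(t))$, each of which depends measurably on $t$.
\end{itemize}
It then follows that $\{t:\operatorname{gph}F(t,\cdot)\cap U\ne\emptyset\}$ is a countable union of measurable sets for every open $U$.

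\textbf{Hypothesis $(\mathcal H^F_{\rm Comp})$ (the main step).} For $A\subset r\mathbb B$ we have
\[
F(t,A)\subset [-\overline h(t),0]\cdot A + B(t)U(t).
\]
The set $B(t)U(t)$ is compact and so has zero measure of noncompactness. For the first term, given a finite $\delta$-net $\{a_k\}$ of $A$ and a finite $\delta/r$-net $\{\lambda_l\}$ of the segment $[-\overline h(t),0]$, the points $\lambda_la_k$ form a net of $[-\overline h(t),0]\cdot A$ with radius $\overline h(t)\delta+r\delta/r$. Refining the segment net so that its contribution becomes arbitrarily small yields
\[
\beta\bigl([-\overline h(t),0]A\bigr)\le\overline h(t)\beta(A).
\]
By subadditivity, $\beta(F(t,A))\le\overline h(t)\beta(A)$, so we may take $k_r=\overline h\in L^1$. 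This is a nontrivial modulus, and neither $C$ nor $F$ is compact here.

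All hypotheses of Theorem~\ref{existencia-sol}(b) are therefore satisfied, and it yields an absolutely continuous solution of \eqref{eq:fishery-sweeping-process}.
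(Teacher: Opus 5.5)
Your proposal is correct and takes the same route as the paper: you verify $(\mathcal H^C)$, $(\mathcal H^f_1)$, $(\mathcal H^f_2)$, $(\mathcal H^F)$ with $c=\overline h$ and $d=d_U$, and $(\mathcal H^F_{\mathrm{Comp}})$ with $k_r=\overline h$, and then invoke alternative (b) of Theorem~\ref{existencia-sol}. The differences are minor. You bound $f_1$ using $|\psi(z)|\le|z|$, whereas the paper uses $|\psi|\le 1$. You also prove graph measurability explicitly, via a Castaing representation of $U$ and a countable dense subset of $\H$, where the paper cites the standard result on images of measurable multifunctions under Carath\'eodory maps.
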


\begin{proof}
For every $t\in I$, the set $C(t)$ is nonempty, closed, and convex. Hence it
is uniformly prox-regular with arbitrary prox-regularity radius. Let
$x\in C(t)$. The projection of $x$ onto $C(s)$ is given pointwise by $ P_{C(s)}x=\max\{x,b(s)\}$. Consequently,
\[
 d(x,C(s))
 =\|(b(s)-x)_+\|
 \leq\|(b(s)-b(t))_+\|
 \leq\|b(s)-b(t)\|.
\]
Interchanging $s$ and $t$ yields $ \operatorname{Haus}(C(t),C(s))
 \leq\|b(t)-b(s)\|
 \leq L_b|t-s|$. Thus, $(\mathcal H^C)$ holds. The scalar function $\psi$ is bounded by one and is globally Lipschitz with
Lipschitz constant one. Therefore,
\[
 \|f_1(t,x)-f_1(t,y)\|
 \leq q(t)\|x-y\| \,\textrm{ and }\,  \|f_1(t,x)\|
 \leq |\Omega|^{1/2}\|r(t,\cdot)\|_{L^\infty(\Omega)}
 +\|\mu(t,\cdot)\|_{L^\infty(\Omega)}\|x\|.
\]
Hence, $(\mathcal H_1^f)$ is satisfied. By the Hilbert-Schmidt estimate and
\eqref{eq:fishery-kernel-bound},
\[
 \|f_2(t,s,x)-f_2(t,s,y)\|
 \leq\kappa(t)\|x-y\| \,\textrm{ and }\,  \|f_2(t,s,x)\|
 \leq\kappa(t)\|x\|
 \leq\kappa(t)(1+\|x\|).
\]
Since $\kappa\in L^1(I)$,  hypothesis
$(\mathcal H_2^f)$ holds.\\
\noindent For each $(t,x)$, the set $F(t,x)$ is nonempty, compact, and convex. Standard
measurability results for images of measurable multifunctions under
Carath\'eodory maps imply $(\mathcal H_1^F)$; see, for example,
\cite{Aubin_Frankowska_2009_book}. To verify $(\mathcal H_2^F)$, fix $t$
outside a null set and suppose that $x_n\to x$, $y_n\rightharpoonup y$, and  $y_n\in F(t,x_n)$. Write $ y_n=-h_nx_n+B(t)u_n$, $h_n\in[\underline h(t),\overline h(t)]$, and $u_n\in U(t)$. The compactness of
$[\underline h(t),\overline h(t)]\times U(t)$ provides a subsequence such
that $h_n\to h$ and $u_n\to u$. Hence, $ y_n\to-hx+B(t)u$ strongly in $\H$, and therefore $y=-hx+B(t)u\in F(t,x)$.
Moreover,
\[
 \|F(t,x)\|
 \leq\overline h(t)\|x\|+d_U(t),
\]
so $(\mathcal H_3^F)$ holds with
$c=\overline h$ and $d=d_U$.\\
\noindent   It remains to verify $(\mathcal H_{\mathcal{\textrm{Comp}}}^F)$. Fix
$r>0$ and a set $A\subset r\mathbb B$. Set $ K_t:=B(t)U(t)$. The set $K_t$ is compact. Since the interval
$[\underline h(t),\overline h(t)]$ is compact, the standard properties of
the Hausdorff (or Kuratowski) measure of noncompactness $\beta$ give
\[
 \beta\bigl(
 \{-h x:h\in[\underline h(t),\overline h(t)],\ x\in A\}
 \bigr)
 \leq\overline h(t)\beta(A).
\]
Indeed, one first covers the compact interval by finitely many intervals of
arbitrarily small diameter and then uses the homogeneity of $\beta$ on each
fixed scalar multiple of $A$. Since addition of a compact set does not
increase the measure of noncompactness, it follows that
$$
 \beta(F(t,A))
 \leq
 \beta\bigl(
 \{-h x:h\in[\underline h(t),\overline h(t)],\ x\in A\}
 \bigr)+\beta(K_t)
 \leq\overline h(t)\beta(A).
$$
Thus, $(\mathcal H_{\mathcal{\textrm{Comp}}}^F)$ holds with $ k_r(t)=\overline h(t)$.
The conclusion now follows from Theorem~\ref{existencia-sol}.
\end{proof}

\begin{remark}
This application genuinely uses alternative~$(b)$ of
Theorem~\ref{existencia-sol}. Indeed, $C(t)$ is not ball-compact. To see this,
fix a measurable set $E\subset\Omega$ with $|E|>0$ and let $(r_n)_n$ be a
Rademacher sequence on $E$. For any $\delta>0$, set $ x_n:=b(t)+\delta(1+r_n)\chi_E$. Then $x_n\in C(t)$ for every $n$, the sequence $(x_n)_n$ is bounded in
$L^2(\Omega)$, and
\[
 \|x_n-x_m\|=\delta\sqrt{2|E|}
 \quad\text{whenever }n\neq m.
\]
Therefore, $C(t)\cap R\mathbb B$ is noncompact for all sufficiently large
$R$. Furthermore, if $\overline h(t)>0$, fix $u_0\in U(t)$. Then
$F(t,\mathbb B)$ contains the translated ball
$-\overline h(t)\mathbb B+B(t)u_0$, and hence the multifunction
$F(t,\cdot)$ does not map bounded sets into relatively compact sets. Thus, the result is
not a consequence of compactness of the moving set or compactness of the
perturbation. What is used is the quantitative estimate $\beta(F(t,A))\leq\overline h(t)\beta(A)$.
\end{remark}

\begin{remark}
For a fish price $p\colon I\to\mathbb R_+$ and a management cost
$\mathfrak c$, one may associate with a feasible triple $(x,h,u)$ the
discounted return
\[
 J(x,h,u)
 :=\int_0^T e^{-\delta t}
 \left(
 p(t)h(t)\int_\Omega x(t,\xi)\,d\xi
 -\mathfrak c(t,h(t),u(t))
 \right)dt.
\]
Proposition~\ref{prop:fishery-existence} guarantees nonemptiness of the
feasible set for the corresponding bioeconomic management problem. The
existence of an optimal management policy would require additional compactness
and lower-semicontinuity assumptions and is not addressed here.
\end{remark}

\section{Conclusions and Future Work}\label{Concl}

In this paper we established the existence of absolutely continuous solutions for
integro-differential sweeping processes of Volterra type driven by an outer set-valued
perturbation, in a separable Hilbert space and with prox-regular moving sets. The
perturbation was only required to be measurable, of linear growth, with nonempty closed
convex values, and upper semicontinuous in the weak sense that its graph is closed when
the state carries the norm topology and the values carry the weak topology. Our analysis
rests on three ingredients: a reduction of the constrained dynamics to an unconstrained
differential inclusion, together with uniform a priori bounds on the state and its
velocity; a continuous-dependence estimate for the associated solution operator,
complemented by a contraction property expressed through a measure of noncompactness; and
a fixed-point theorem for upper semicontinuous set-valued maps with contractible values,
whose applicability was secured by a continuation argument establishing the
contractibility of the solution set of the auxiliary problem. Existence was obtained under
either of two alternative compactness hypotheses: ball-compactness of the moving sets, or
a quantitative measure-of-noncompactness condition on the perturbation. In the latter
case, the general result was reached by means of a truncation technique.

The scope and the limitations of these results delineate several concrete avenues for
future research, of which we regard the following as the most natural.

A first question is whether the two compactness hypotheses considered here can be unified
and relaxed. These alternatives, and the Lipschitz framework of Haddad, Gaouir and
Thibault \cite{HaddadGaouirThibault2025}, are complementary rather than nested. It would
therefore be desirable to formulate a single structural condition on the perturbation, of
dissipative or one-sided Lipschitz type and possibly combined with a partial compactness
requirement, that subsumes the upper semicontinuous, convex-valued setting of the present
work as well as the Lipschitz, nonconvex-valued setting of
\cite{HaddadGaouirThibault2025}, thereby clarifying the exact interplay between regularity
and compactness needed for well-posedness.

A second direction leads beyond prox-regular constraints. The extension of the present
framework to Volterra-perturbed evolution inclusions governed by time-dependent maximal
monotone operators appears promising, and the recent work of Azzam and collaborators
\cite{azzam2025volterra} provides a natural starting point. The central difficulty is to
identify structural properties of the operators under which the reduction and the
contractibility arguments developed herein survive, since prox-regularity of the sets is
used at several points. Closely related is the treatment of state-dependent moving sets
$C(t,x)$ in the presence of the memory term, where even the reduction step must be
reconsidered.

The formulation adopted here also opens control-theoretic questions. The set-valued
perturbation admits a natural interpretation as an admissible control acting on the drift,
which suggests the study of associated optimal control problems and the derivation of
necessary optimality conditions. The discretization and control theory developed for
truncated integro-differential sweeping processes in
\cite{BouachHaddadThibault2022,Haddad2022} indicates a viable route, and the bioeconomic
fishery model of Section~\ref{sec:fishery-application}, in which the harvesting policy is
the decision variable, provides a concrete testbed.

Since the applications considered here are governed by uncertain data, a further natural
step is to develop a genuinely stochastic counterpart in which the perturbation is replaced
or augmented by a random forcing; establishing well-posedness under
measure-of-noncompactness conditions in that setting appears to be open. In a different
direction, the Volterra structure invites the replacement of the memory kernel by a
nonlocal, fractional-in-time operator of Caputo type, together with the identification of
the kernels for which the a priori estimates and the compactness arguments of Section
\ref{preliminary_L} remain valid.

Finally, from a computational standpoint, the design and convergence analysis of
catching-up time-stepping schemes that simultaneously resolve the moving constraint, the
history-dependent integral term, and the set-valued perturbation would complement the
existence theory developed here, and would render the contact and fishery models of
Sections~\ref{sec:viscoelastic-contact} and \ref{sec:fishery-application} amenable to
simulation. Quantitative error estimates in the spirit of the Galerkin-like analysis of
\cite{Pedro-Manuel-Emilio-2024} would be particularly valuable in this respect.

We emphasize that the results obtained here concern existence rather than uniqueness: for a
merely upper semicontinuous, set-valued perturbation, uniqueness is neither expected nor, in
general, true, in contrast with the single-valued case $F\equiv 0$ treated in
\cite{Vilches2024}. Characterizing the perturbations for which the solution set enjoys
additional structural properties, such as compactness, connectedness, or continuous
dependence with respect to the data, constitutes a further question of independent interest.

\backmatter

\section*{Declarations}

\subsection*{Funding} Diana Narv\'aez was supported by ANID (Chile) through Fondecyt Postdoctoral Grant No.~3240146. Emilio Vilches was supported by ANID (Chile) through Fondecyt Regular Grants Nos.~1240120 and 1261728, the CMM BASAL Fund for Centers of Excellence (FB210005), and the ECOS-ANID project ECOS230027.

\subsection*{Conflict of interest} Not applicable.

\bibliography{sn-bibliography}

\end{document}